\newcounter{enumitemp}
\newenvironment{enumeratecontinue}{
 \setcounter{enumitemp}{\value{enumi}}
 \begin{enumerate}
 \setcounter{enumi}{\value{enumitemp}}
}
{
 \end{enumerate}
}
\newcommand\pref[1]{(\ref{#1})}
\newtheorem{thm}{Theorem}[section]
\newtheorem{theorem}[thm]{Theorem}
\newtheorem{lemma}[thm]{Lemma}
\newtheorem*{theoremD}{Global WWPD Theorem}
\newtheorem*{corollary*}{Corollary}
\newtheorem{corollary}[thm]{Corollary}
\newtheorem{proposition}[thm]{Proposition}
\newtheorem{PropAndDef}[thm]{Proposition And Definition}
\newtheorem*{proposition*}{Proposition}
\newtheorem*{theoremWWPDSpecial}{Theorem \ref{ThmWWPDHTwoBSpecial}}
\theoremstyle{definition}
\newtheorem{definition}[thm]{Definition} 
\newtheorem*{defn*}{Definition}
\theoremstyle{remark}
\newcounter{remarks}
{\paragraph*{Remarks}\smallskip
 \begin{list}{\arabic{remarks}. }{\usecounter{remarks}%
 \setlength{\leftmargin}{0in}%
 \setlength{\rightmargin}{0in}%
 \setlength{\labelsep}{0pt}%
 \setlength{\labelwidth}{0pt}%
 \setlength{\listparindent}{0pt}%
 }
}
{
\end{list}
}
\newcommand\vp{{\vphantom\prime}}
\newcommand\from\colon
\newcommand\inv{{-1}}
\newcommand\subgroup{<}
\newcommand\normal{\triangleleft}
\newcommand\infinity\infty
\newcommand\disjunion\coprod
\newcommand\act\curvearrowright
\newcommand\lessless{<\!\!<}
\DeclareMathOperator\image{Image}
\DeclareMathOperator\kernel{Ker}
\DeclareMathOperator\Sym{Sym}
\DeclareMathOperator\Isom{Isom}
\newcommand{\R}{{\mathbb R}}
\newcommand\reals{\R}
\newcommand{\Z}{{\mathbb Z}}
\newcommand\semidirect{\rtimes}
\DeclareMathOperator{\Out}{\mathsf{Out}}
\DeclareMathOperator{\Aut}{\mathsf{Aut}}
\DeclareMathOperator{\Inn}{\mathsf{Inn}}
\DeclareMathOperator{\Stab}{\mathsf{Stab}}
\newcommand\cbdy\delta
\newcommand{\A}{\mathcal A}
\newcommand{\noneg}{NEG}
\renewcommand\neg\noneg
\newcommand{\wt}{\widetilde}
\newcommand{\comment}[1]{}
\newcommand\BeFujiTag{BestvinaFujiwara:bounded}
\newcommand\BeFuji{\cite{BestvinaFujiwara:bounded}}
\newcommand\bdy\partial
\newcommand\intersect\cap
\newcommand\union\cup
\newcommand\<\langle
\renewcommand\>\rangle
\newcommand\meet\wedge
\newcommand\composed{\circ}
\newcommand\cross\times
\newcommand\restrict{\bigm |}
\newcommand\inject\hookrightarrow
\DeclareMathOperator\Length{Length}
\newcommand\abs[1]{\left|#1\right|}
\newcommand\Id{\text{Id}}
\newcommand\injectto\hookrightarrow
\newcommand\injectfrom\hookleftarrow
\newcommand\surjectto\twoheadrightarrow
\newcommand\surjectfrom\twoheadleftarrow
 \newcommand\surjection\twoheadrightarrow
\newcommand\suchthat{\bigm|}
\title{Second bounded cohomology and WWPD}
\author{Michael Handel and Lee Mosher \thanks{The first author  was supported by the National Science Foundation under Grant No.~DMS-1308710 and by PSC-CUNY under grants in Program Years 46 and 47. The second author was supported by the National Science Foundation under Grant No.~DMS-1406376.}}
\begin{document}

\maketitle

\begin{abstract}
Given a group acting on a Gromov hyperbolic space, Bestvina and Fujiwara introduced the WPD property --- weak proper discontinuity --- for studying the 2nd bounded cohomology of the group. We carry out a more general study of second bounded cohomology using a \emph{really} weak property discontinuity property known as WWPD that was introduced by Bestvina, Bromberg, and Fujiwara. 
\end{abstract}

\section{Introduction}

In their work on the 2nd bounded cohomology of subgroups of surface mapping class groups \BeFuji, Bestvina and Fujiwara introduced the WPD property of a group action on a Gromov hyperbolic space. The WPD property functions as an abstraction of methods that evolved over a series of works which study the 2nd bounded cohomology of groups that act in certain ways on hyperbolic spaces \cite{Brooks:H2bRemarks}, \cite{BrooksSeries:H2bSurface}, \cite{EpsteinFujiwara}, \cite{Fujiwara:H2BHyp}, \cite{Fujiwara:H2bFreeProduct}. In particular, Theorems~7 and~8 of \BeFuji\ show that when all the loxodromic elements of a hyperbolic group action $\Gamma \act X$ satisfy the WPD property then the second bounded cohomology group $H^2_b(\Gamma;\reals)$ is of uncountable dimension, by virtue of containing an embedded copy of $\ell^1$. The WPD property itself has also proved useful for other applications \cite{Bowditch:tight,Osin:AcylHyp,BHS:Hierarchy}. 

In their recent work on mapping class groups, Bestvina, Bromberg and Fujiwara introduced a \emph{really} weak version of WPD known as WWPD, applying it to stable commutator length \cite{BBF:SCLonMCG}. In this paper we apply WWPD properties of a hyperbolic group action $\Gamma \act X$ to the study of $H^2_b(\Gamma;\reals)$, generalizing the WPD methods of \BeFuji. Our main result is the Global WWPD Theorem, stated below. We also include, in Section~2, a general study of the WWPD property for an individual element $g \in \Gamma$, including proofs of equivalence of several different versions of that property,  such as the ``WWPD~(2)'' property which says that the repeller--attractor pair $\bdy_\pm g = (\bdy_- g,\bdy_+ g)$ has discrete orbit under the natural action of $\Gamma$ on $\bdy X \times \bdy X - \Delta$.


Our motivation for proving the Global WWPD Theorem is its application to studying the 2nd bounded cohomology of subgroups of $\Out(F_n)$, the outer automorphism group of a rank $n$ free group $F_n$. The main theorem of our two part work \cite{HandelMosher:BddCohomologyI,HandelMosher:BddCohomologyII}\footnote{This paper was split off from an earlier version (\texttt{arXiv:1511.06913v4}) of our preprint \cite{HandelMosher:BddCohomologyI}.}
says that for every finitely generated subgroup $\Gamma \subgroup \Out(F_n)$, either $\Gamma$ has an abelian subgroup of finite index or its second bounded cohomology $H^2_b(\Gamma;\reals)$ contains an embedded copy of $\ell^1$. In that work, although the group actions we considered did not have enough WPD elements to prove the main theorem by applying the WPD methods of \BeFuji, nonetheless we \emph{were} able to construct sufficiently many WWPD elements to allow us to apply the Global WWPD Theorem. 

The Global WWPD Theorem can be thought of as a simultaneous generalization of Theorems~7 and~8 of \BeFuji. In each of those theorems the conclusion is that $H^2_b(\Gamma;\reals)$ contains an embedded copy of $\ell^1$. Theorem~7 of \BeFuji\ reaches this conclusion assuming the existence of a hyperbolic action of the group $\Gamma$ which satisfies a strong ``global WPD hypothesis'', saying that the action is nonelementary and that every loxodromic element of $\Gamma$ satisfies WPD. Theorem~8 of \BeFuji\ weakens that hypothesis by requiring not an action of $\Gamma$ itself, but instead just a hyperbolic action of a finite index normal subgroup $N \normal \Gamma$ satisfying the global WPD hypothesis, and in addition satisfying a strong wreath product hypothesis. 

The Global WWPD Theorem will also require only a hyperbolic action of a finite index normal subgroup $N \normal \Gamma$. No wreath product hypothesis is needed; in its place the proof uses the Kaloujnin Krasner wreath product embedding. But some kind of replacement for the global WPD hypothesis is needed, asserting the existence of sufficiently many WWPD elements (see the introductory paragraphs of Section~\ref{SectionWreathProductMethods} for further discussion). For purposes of application to \cite{HandelMosher:BddCohomologyI,HandelMosher:BddCohomologyII}, we were able to arrive at a rather weak but sufficient hypothesis by abstracting features of the proofs of \cite[Theorems~7 and~8]{\BeFujiTag}, as follows:
\begin{description}
\item[Global WWPD Hypothesis:] Given a group $\Gamma$ and a hyperbolic action $N \act X$ of a finite index normal subgroup $N \normal \Gamma$, we say that the \emph{global WWPD hypothesis} holds if there exists a rank~2 free subgroup $F < N$ satisfying the following: 
\begin{itemize}
\item Each element of $F$ is a WWPD element of the action $N \act X$, and the restricted action $F \act X$ is Schottky;
\item For every $g \in \Gamma$, denoting its associated inner automorphism as \hbox{$i_g \from \Gamma \to \Gamma$,} and letting $N \act_g X$ denote the composed action $N \xrightarrow{i_g} N \act X$, the restricted action $F \act_g X$ satisfies one of two properties: either the elements of $F$ are all elliptic; or each element of $F$ is a WWPD element of the action $N \act_g X$ and the action $F \act_g X$ is Schottky. 
\end{itemize}
\end{description}
With this hypothesis we can now state our main theorem:

\begin{theoremD}
For any group $\Gamma$ and any hyperbolic action $N \act X$ of a finite index normal subgroup $N \normal \Gamma$, if the global WWPD hypothesis holds then $H^2_b(\Gamma;\reals)$ contains an embedding of $\ell^1$.
\end{theoremD}
\noindent
The proof of the Global WWPD Theorem, which is found in Section~\ref{SectionWreathProductMethods}, uses WWPD methods to construct quasimorphisms, generalizing constructions using WPD methods that are found in \BeFuji. 

In the special case $N=\Gamma$, i.e.\ the case of a hyperbolic action of the whole group $\Gamma$, the statement and proof of the Global WWPD theorem can be considerably simplified: the global WWPD hypothesis can be replaced with the assumption that a single WWPD element exists; and wreath product methods are not needed in the proof. The resulting theorem is as follows:

\begin{theoremWWPDSpecial}
If $\Gamma \act X$ is a hyperbolic action possessing an independent pair of loxodromic elements, and if $\Gamma$ has a WWPD element, then $H^2_b(\Gamma;\reals)$ contains an embedded~$\ell^1$. \qed
\end{theoremWWPDSpecial}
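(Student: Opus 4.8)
The plan is to derive the statement from the Global WWPD Theorem applied with $N = \Gamma$ (which is trivially a finite-index normal subgroup of itself), so everything comes down to verifying the global WWPD hypothesis in this special case. That in turn reduces to one substantive task: producing a rank~2 free subgroup $F < \Gamma$ with $F \act X$ Schottky and every nontrivial element of $F$ a WWPD element of $\Gamma \act X$. Granting such an $F$, the ``for every $g \in \Gamma$'' clause of the hypothesis is automatic: for $g \in \Gamma$ the action $F \act_g X$ has image the conjugate $g F g^{-1}$, which is again Schottky and is isomorphic to $F$ via conjugation, so $F \act_g X$ is Schottky; a conjugate of a loxodromic isometry is loxodromic, so no nontrivial element of $F$ becomes elliptic under $\act_g$ and the second of the two alternatives always applies; and each $f \in F$ is a WWPD element of $\Gamma \act_g X$ because its repeller--attractor pair for $\act_g$ is $g \cdot \bdy_\pm f$, while $\Gamma$ acting on $\bdy X$ via $\act_g$ differs from $\Gamma \act \bdy X$ only by the homeomorphism $g$, so the relevant orbit is just the $g$-image of $\Gamma \cdot \bdy_\pm f$ and is equally discrete.

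To build $F$, I would start from the given WWPD loxodromic $h$. Two elementary facts drive the construction: WWPD is preserved by conjugation (the $\Gamma$-orbit of the repeller--attractor pair is unchanged) and by passing to powers ($\bdy_\pm h^k = \bdy_\pm h$), with the WWPD constants uniform over the resulting family of conjugates of powers of $h$. Since the action carries an independent pair of loxodromics it is non-elementary, so choose a loxodromic $c$ whose endpoint pair is disjoint from $\bdy_\pm h$; then for a large exponent $K$ and sufficiently separated $i \ne j$ the elements $x := c^i h^K c^{-i}$ and $y := c^j h^K c^{-j}$ have four pairwise-distinct endpoints, and ping-pong near those four points of $\bdy X$ makes $F := \<x,y\>$ free of rank~2 with $F \act X$ Schottky. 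Its generators, being conjugates of the power $h^K$ of the WWPD element $h$, are WWPD. It remains to promote ``generators WWPD'' to ``every element of $F$ is WWPD''; for this I would use that the axis $A_w$ of an arbitrary $w \in F$ is coarsely a bi-infinite concatenation of uniformly bounded $F$-translates of a fixed finite set of generator-axis segments, together with the fellow-travelling reformulation of WWPD --- for every $D$ there is $L$ so that any $\gamma \in \Gamma$ whose translate $\gamma A$ of an axis $A$ fellow-travels $A$ along a subsegment of length $\ge L$ must coarsely preserve all of $A$ --- and the uniformity of the WWPD constants, to bootstrap control of the generator axes up to control of $A_w$.

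The step I expect to be the main obstacle is precisely this bootstrap. Unlike WPD, the WWPD hypothesis supplies no finiteness and, a priori, no uniformity, so one must argue with care that the ``long fellow-travelling forces coarse invariance'' property transfers from the finitely many generator axes to the infinitely many, a priori unrelated, $\Gamma$-orbits of axes of the remaining elements of $F$: the delicate point is to slide a translate $\gamma A_w$ into position over a single generator syllable of $A_w$, invoke that generator's WWPD property with a uniform constant, and then propagate the conclusion along the syllable decomposition of $A_w$. Everything else --- the ping-pong construction of $F$, the verification of the ``for every $g$'' clause, and the final appeal to the Global WWPD Theorem --- is routine. One could alternatively bypass the Global WWPD Theorem and argue directly in the manner of \cite[Theorems~7 and~8]{\BeFujiTag}: take the pairwise-independent family $g_n := c^n h^K c^{-n}$ of WWPD loxodromics with uniform constants, form the counting quasimorphisms adapted to their axes, and show that the resulting classes in $H^2_b(\Gamma;\R)$ are $\ell^1$-independent; in that route no wreath-product machinery is used, and the work instead shifts to redoing the Bestvina--Fujiwara counting estimates with WWPD in place of WPD, which is exactly what the various equivalent formulations of the WWPD property are set up to enable.
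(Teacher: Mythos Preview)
Your approach is correct in spirit but takes a substantially longer route than the paper, and there are two genuine gaps in the plan as stated.

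\textbf{How the paper does it.} The paper does not go through the Global WWPD Theorem at all. Instead it combines Proposition~\ref{PropWWPDProps} directly with \cite[Theorem~1]{\BeFujiTag}: since the action is nonelementary there exists $\alpha \in \Gamma - \Stab(\bdy\gamma)$ for the given WWPD element $\gamma$, and Proposition~\ref{PropWWPDProps} then produces loxodromics $g_1,h_1$ with $g_1 \not\sim h_1$ and $g_1 \not\sim h_1^{-1}$. That pair is exactly the input required by \cite[Theorem~1]{\BeFujiTag}, which immediately yields the embedded $\ell^1$ in $H^2_b(\Gamma;\reals)$. No Schottky group whose \emph{every} element is WWPD is needed; the only WWPD element used is $\gamma$ itself.

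\textbf{The gaps in your route.} First, the bootstrap --- promoting ``generators WWPD'' to ``every nontrivial element of $F$ is WWPD'' --- is genuinely nontrivial, and you do not carry it out. Nothing in the paper proves this (the paper never needs it), and your sketch leaves unresolved exactly the delicate point you flag: when a translate $\gamma A_w$ fellow-travels $A_w$, the syllable decomposition of $\gamma A_w$ need not align with that of $A_w$, so the WWPD property of the generators only tells you that certain $\Gamma$-translates of $A_h$ coincide as quasi-lines, which is not obviously enough to force $\gamma \in \Stab(\bdy w)$. Second, you overlook condition~\pref{ItemLoxOrEll} of the global WWPD hypothesis (Definition~\ref{DefWWPDHypothesis}), which requires every element of $N=\Gamma$ to act loxodromically or elliptically. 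The hypotheses of the theorem do not exclude parabolic elements of $\Gamma$ (think of $\SL_2(\Z)$ acting on $\H^2$), so the Global WWPD Theorem cannot be applied as a black box. In the case $N=\Gamma$ the \emph{proof} of that theorem happens not to use condition~\pref{ItemLoxOrEll}, since the only elements on which the quasimorphisms must be evaluated lie in $F$; but invoking this means re-running the argument rather than citing the result.

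\textbf{Your alternative.} The closing suggestion --- bypassing the Global WWPD Theorem and imitating \cite[Theorems~7 and~8]{\BeFujiTag} directly --- is nearer to what the paper does, but still more elaborate than necessary: the paper appeals to \cite[Theorem~1]{\BeFujiTag}, whose sole hypothesis is a pair $g_1 \not\sim h_1^{\pm 1}$, rather than reworking the WPD counting estimates of Theorems~7 and~8.
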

\noindent
The proof is given at the end of Section~\ref{SectionWWPDTheory} by combining the tools from that section with a result of Bestvina and Fujiwara, namely \cite[Theorem 1]{\BeFujiTag}.


\section{Weakening weak proper discontinuity}
\label{SectionWWPDTheory}
This section develops the theory of WWPD elements of hyperbolic actions. 

Section~\ref{SectionBasicConcepts} contains a review of basic concepts of hyperbolic actions, and of the close relation between 2nd bounded cohomology and quasimorphisms. 

Section~\ref{SectionWPDandWWPD} reviews the WPD and WWPD properties, and contains proofs of some key results, namely Proposition~\ref{PropDefWWPD} and its corollaries, which state several equivalent formulations of the WWPD property of an element of a hyperbolic action, and which establish various other properties of such elements. 

In Section~\ref{SectionSim} we describe some tools that will be used in the proof of the global WWPD Theorem to be presented in Section~\ref{SectionWreathProductMethods}. Definition~\ref{DefinitionSim} reviews the useful equivalence relation $g \sim h$ on the set of loxodromic elements of a hyperbolic action, which was introduced by Bestvina and Fujiwara for purposes of their general quasimorphism construction theorem \cite[Theorem 1]{\BeFujiTag}. Lemma~\ref{LemmaSimEquivProps} explains some elementary interactions between the WWPD property and the equivalence relation $g \sim h$. Then we review hyperbolic ping-pong and Schottky groups, followed by Proposition~\ref{PropWWPDProps} which explains how to use WWPD to construct Schottky subgroups possessing useful inequivalent loxodromic pairs $g \not\sim h$ (c.f.\ \cite[Proposition 6 (5)]{\BeFujiTag}).

\subsection{Basic concepts.}
\label{SectionBasicConcepts}

\paragraph{Hyperbolic actions.} We use $\A \from \Gamma \act X$ to denote an action of a group $\Gamma$ on an object $X$, meaning simply a homomorphism $\A \from \Gamma \to \Isom(X)$ to the group of self-isomorphisms of~$X$; this applies in any category. We often write $\Gamma \act X$, suppressing~$\A$, and we write expressions such as $\gamma \cdot x$ as a shorthand for $\A(\gamma)(x)$, \hbox{($\gamma \in \Gamma$, $x \in X$).} The \emph{stabilizer} of a subset $Y \subset X$ is denoted $\Stab(Y) = \Stab(Y;\Gamma) = \{\gamma \in \Gamma \suchthat \gamma \cdot Y = Y\}$.

For background material on quasi-isometries and Gromov hyperbolicity, see for example \cite{Gromov:hyperbolic,CDP,ABCFLMSS,GhysHarpe:afterGromov}. Here we review some notation, terminology, and simple facts. For the rest of the paper, we will use many elementary facts about this material without citation.

Let $X$ be a geodesic metric space which is hyperbolic in the sense of Gromov. Adjoining the Gromov boundary $\bdy X$ gives a topological space $\overline X = X \union \bdy X$ in which $X$ is dense. Any continuous quasi-isometric embedding $X \mapsto Y$ of hyperbolic metric spaces extends uniquely to a continuous map $\overline X \mapsto \overline Y$. A \emph{hyperbolic action} is simply an isometric action $\Gamma \act X$ of a group $\Gamma$ on a hyperbolic metric space $X$; 
the unique extensions to $\overline X$ of the individual elements of $\Gamma$, taken together, form a unique extended topological action $\Gamma \act \overline X$. The space of two point subsets of the boundary is denoted $\bdy^2 X = \{\{\xi,\eta\} \suchthat \xi \ne \eta \in \bdy X\}$ with topology induced by the 2-1 covering map $(\bdy X \times \bdy X) - \Delta \mapsto \bdy^2 X$, equivalently the Hausdorff topology on compact subsets of~$\bdy X$. For any isometric action $\Gamma \act X$, the extended action on $\bdy X$ naturally induces actions of $\Gamma$ on $(\bdy X \times \bdy X) - \Delta$ and on $\bdy^2 X$.

Isometries $h \from X \to X$ of a hyperbolic metric space are classified into three types: $h$ is \emph{elliptic} if the orbit $h^k(x)$ is bounded for some (every) $x \in X$; $h$ is \emph{loxodromic} if the orbit map $k \mapsto h^k \cdot x$ is a continuous quasi-isometric embedding for some (every) $x \in X$; and otherwise $h$ is \emph{parabolic}. 

The loxodromic property has several equivalent formulations. For example, $h$ is loxodromic if and only if there exists an ordered pair of distinct boundary points denoted $\bdy_\pm h = (\bdy_- h, \bdy_+ h) \in \bdy X \times \bdy X - \Delta$ such that for all $x \in \overline X$, if $x \ne \bdy_- h$ then the $\lim_{k \to +\infty} h^k(x)=\bdy_+ h$, and if $x \ne \bdy_+ h$ then $\lim_{k \to -\infty} h^k(x) = \bdy_- h$. The associated unordered pair of points is denoted $\bdy h = \{\bdy_- h,\bdy_+ h\} \in \bdy^2 X$. Also, $h$ is loxodromic if and only if it has a \emph{quasi-axis}, which is a continuous quasi-isometric embedding $\ell \from \reals = (-\infty,+\infty) \to X$ such that $h(\ell(t))=\ell(t+T)$ where $T > 0$ is independent of $t \in \reals$. In this situation the map $\ell$ has a unique continuous extension $\bar \ell \from [-\infty,+\infty] \to \overline X$, this extension satisfies $\bar \ell(-\infty) = \bdy_- h$ and $\bar \ell(+\infty) = \bdy_+ h$, and we denote $\bdy_-\ell = \bdy_- h$ and $\bdy_+ \ell = \bdy_+ h$. A quasi-axis $\ell$ is determined by any of its \emph{fundamental domains}, meaning its restrictions of the form $\ell\restrict [a,a+T]$ for any $a \in \reals$. 

\paragraph{$2^{\text{nd}}$ bounded cohomology.} Consider a group $\Gamma$ and its cochain complex $C^*(\Gamma;\reals)$, where $C^k(\Gamma;\reals)$ is the vector space of functions $f \from \Gamma^k \mapsto \reals$, with the standard coboundary operator $\delta \from C^k(\Gamma;\reals) \to C^{k+1}(\Gamma;\reals)$ \cite{Brown:cohomology}. The only case that we shall actually use is $k=1$ where $\delta \from C^1(\Gamma;\reals) \to C^2(\Gamma;\reals)$ is defined by 
$$\delta f (g_0,g_1) = f(g_0) - f(g_0 g_1) + f(g_1) 
$$
For general $k$ the formula for $\delta$ is given by
$$\delta f (g_0,g_1,\ldots,g_k) = \sum_{i=0}^{k+1} (-1)^i f(\pi_i(g_0,g_1,\ldots,g_k))
$$
where $\pi_0$ omits the $g_0$ coordinate, $\pi_{k+1}$ omits the $g_k$ coordinate, and if $1 \le i \le k$ then $\pi_i$ removes the comma between the coordinates $g_{i-1}$ and $g_i$ and multiplies them together. 

The bounded cohomology $H^k_b(\Gamma;\reals)$ is the cohomology of the subcomplex of bounded cochains $C^k_b(\Gamma;\reals) = \{f \in C^k(\Gamma,\reals) \suchthat \text{$f$ is bounded}\}$, in particular:
$$H^2_b(\Gamma;\reals) = \frac{\kernel(C^2_b(\Gamma;\reals) \to C^3_b(\Gamma;\reals))}{\image(C^1_b(\Gamma;\reals) \to C^2_b(\Gamma;\reals))}
$$

For studying the $2^{\text{nd}}$ bounded cohomology $H^2_b(\Gamma;\reals)$, we review a standard method which uses the closely related concept of ``quasimorphisms'' on $\Gamma$ (see for example the works cited in the first paragraph of the introduction). 

The vector space of \emph{quasimorphisms} of $\Gamma$ is defined to be
$$QH(\Gamma;\reals) = \{f \in C^1(\Gamma;\reals) \suchthat \cbdy\!f \in C^2_b(\reals)\}
$$
The \emph{defect} of $f \in QH(\Gamma;\reals)$ is the non-negative real number 
$$\sup \abs{\cbdy\!f(g_0,g_1)} = \abs{f(g_0) - f(g_0g_1) + f(g_1)}
$$ 
which measures how far $f$ deviates from being a homomorphism. The quotient space $\wt{QH}(\Gamma,\reals)$ is defined by modding out $QH(\Gamma;\reals)$ by the subspace spanned by those $f \from \Gamma \mapsto \reals$ which are either actual homomorphisms or bounded functions. One obtains an exact sequence 
\begin{align*}
0 \to \wt{QH}(\Gamma;\reals) \to & H^2_b(\Gamma;\reals) \to H^2(\Gamma;\reals) \\
\intertext{by applying the snake lemma to the short exact sequence of cochain complexes of the form}
1 \mapsto & C^*_b(\Gamma;\reals) \to C^*(\Gamma;\reals) \to C^*(\Gamma;\reals) / C^*_b(\Gamma;\reals) \to 1
\end{align*}
Thus to prove that $H^2_b(\Gamma;\reals)$ contains an embedded $\ell^1$, it is sufficient to prove the same for $\wt{QH}(\Gamma;\reals)$.

\subsection{Variant definitions of WWPD} 
\label{SectionWPDandWWPD}
In this section we fix a hyperbolic action $\Gamma \act X$. All of the definitions in this section are relative to the given action, which often goes without mention if the context is clear. After quickly reviewing concepts of ``weak proper discontinuity'' or WPD \cite{\BeFujiTag,Osin:AcylHyp}, we study in detail the concepts of ``\emph{really} weak proper discontinuity'' or WWPD \cite{BBF:SCLonMCG}.

A classification of hyperbolic actions, based on the dynamics of the extended actions on $\overline X$, may be found in Gromov's original work \cite{Gromov:hyperbolic}. We shall focus on three special classes of actions, the only ones that are relevant to our current setting (see also Corollary~\ref{CorollaryActionClasses} and the preceding paragraph). Following \BeFuji, the action $\Gamma \act X$ is \emph{elementary} if there exists a pair of loxodromic elements $\gamma,\delta \in \Gamma$ which are \emph{independent}, meaning that $\bdy\gamma \intersect \bdy\delta = \emptyset$. Also, the action is \emph{axial} if there exists a loxodromic element $\gamma \in \Gamma$ such that the two-point subset $\bdy \gamma \subset \bdy X$ is $\Gamma$-invariant. Finally, the action is \emph{elliptic} if every element is elliptic. 

%

In \BeFuji\ a ``global'' WPD property was defined by Bestvina and Fujiwara as a property of the whole group action $\Gamma \act X$. In his work on acylindrically hyperbolic groups \cite{Osin:AcylHyp}, Osin realized, and exploited, the fact that the global WPD property is expressed as a universal quantification over the following ``individual'' WPD property for loxodromic elements~$h \in \Gamma$:

\begin{definition}[\cite{Osin:AcylHyp}]
\label{DefIndividualWPD}
Given a loxodromic element $h \in \Gamma$, we say that \emph{$h$ satisfies WPD with respect to~$\Gamma$} if for every $x \in X$ and $R > 0$ there exists an integer $M>0$ such that any subset $Z \subset \Gamma$ satisfying the following property is finite: 
\begin{itemize}
\item For each $g \in Z$ we have $d(x,g(x)) < R$ and $d(h^M(x),gh^M(x)) < R$.
\end{itemize}
\end{definition}

\begin{definition}[\BeFuji]
\label{DefWPD}
The action $\Gamma \act X$ is said to satisfy WPD if it is nonelementary and every loxodromic element of $\Gamma$ satisfies~WPD with respect to~$\Gamma$. 
\end{definition}

\medskip

A \emph{really} weak proper discontinuity property WWPD, for individual loxodromic elements of a hyperbolic action, was formulated by Bestvina, Bromberg, and Fujiwara in \cite{BBF:MCGquasitrees}. We use the formulation from their paper \cite{BBF:SCLonMCG}, together with several other formulations whose equivalence is proved in Proposition~\ref{PropDefWWPD} below. One may think of WWPD as a fragment of the individual WPD property that is designed to allow for looser algebraic behavior while still capturing the essential dynamic behavior. We give four equivalent versions of WWPD, all sharing the following notation: 
\begin{itemize}
\item $\Gamma \act X$ is a group action on a hyperbolic complex;
\item $h \in \Gamma$ is a loxodromic element of the action.
\end{itemize}
The first version of WWPD is the fragment of the individual WPD property (Definition~\ref{DefIndividualWPD}) where instead of counting the total number of group elements that move point pairs a small amount, one counts only the number of left cosets of $\Stab(\bdy_\pm h)$ represented by such group elements.


\vspace{-3mm}
\paragraph{WWPD~\pref{ItemWWPDAcyl}: $h$ is equivariantly acylindrical:} \quad
\vspace{-2mm}
\begin{enumerate}
\item\label{ItemWWPDAcyl}
For every $x \in X$ and $R>0$ there exists an integer $M \ge 1$ such that any subset $Z \subset \Gamma$ that satisfies the following two properties is finite:
\begin{enumerate}
\item\label{ItemAcylindrical}
For each $g \in Z$ we have $d(x,g(x)) < R$ and $d(h^M(x),g h^M(x)) < R$; and
\item\label{ItemDifferentCosets}
No two elements of $Z$ lie in the same left coset of $\Stab(\bdy_\pm h)$.
\end{enumerate}
\end{enumerate} 
The second version of WWPD was stated in the introduction, and focuses on discreteness at infinity. It is shown in the proof of \BeFuji\ Proposition 6 that if $h \in \Gamma$ satisfies the individual WPD property then its endpoint pair $\bdy_\pm h \in \bdy X \times \bdy X - \Delta$ has discrete orbit under the diagonal action of $\Gamma$ on $\bdy X \times \bdy X - \Delta$ and the stabilizer subgroup of the endpoint pair $\bdy_\pm h$ is virtually infinite cyclic. The converse also holds, as we make explicit in Corollary~\ref{CorollaryWPDDiscrete} below. As it turns out, when generalizing from WPD to WWPD, what one loses control over is the structure of the group $\Stab(\bdy_\pm h)$ (and of its index~$\le 2$ supergroup $\Stab(\bdy h)$); what one retains is precisely the discrete topology of the orbit of $\bdy_\pm h$. 

\vspace{-3mm}
\paragraph{WWPD~\pref{ItemEqDiscYes}: $h$ has equivariantly discrete endpoint pairs:} \quad
\vspace{-2mm}
\begin{enumeratecontinue}
\item\label{ItemEqDiscYes}
The $\Gamma$-orbit of $\bdy h$ in the space $\bdy^2 X$ is a discrete subset, equivalently the $\Gamma$-orbit of $\bdy_\pm h$ in the space $\bdy X \times \bdy X - \Delta$ is a discrete subset.
\end{enumeratecontinue}
Note that equivalence of the two versions of WWPD~\pref{ItemEqDiscYes} is easily deduced using the $\Gamma$-equivariant degree 2 covering map $\bdy X \times \bdy X - \Delta \mapsto \bdy^2 X$, the deck group of which swaps the two Cartesian factors of $\bdy X \times \bdy X$.

The third version of WWPD is from \cite{BBF:SCLonMCG}:

\vspace{-3mm}
\paragraph{WWPD~\pref{ItemWWPDQuasiAxes}: $h$ has equivariantly bounded projections} \quad
\vspace{-2mm}
\begin{enumeratecontinue}
\item\label{ItemWWPDQuasiAxes} For any quasi-axis $\ell$ of $h$ there exists $D \ge 0$ such that for any $g \in \Gamma$, if $g \not\in \Stab(\bdy h)$ then the image of a closest point map $g(\ell) \mapsto \ell$ has diameter~$\le D$.
\end{enumeratecontinue}
The fourth and final version of WWPD is a variant of WWPD~\pref{ItemWWPDAcyl}, but with somewhat different logical structure. We state WWPD~\pref{ItemBadRayNo} in two forms, where a quantifier is switched. 

\vspace{-3mm}
\paragraph{WWPD~\pref{ItemBadRayNo}: $h$ is equivariantly acylindrical (variant):} \quad
\vspace{-2mm}
\begin{enumeratecontinue} 
\item\label{ItemBadRayNo}
In the group $\Gamma$ \emph{there is NO} infinite sequence $g_1,g_2,g_3,\ldots$ satisfying the following properties:
\vspace{-2mm}
\begin{enumerate}
\item\label{ItemBadRayDiffCosets}
For all $i \ne j$ the elements $g_i,g_j$ lie in different left cosets of $\Stab(\bdy_\pm h)$.
\item\label{ItemBadRayUnmoved}
For all (There exists) $x \in X$, there exists $R>0$ such that for all \break $M \ge 0$ there exists $I \ge 0$ such that if $0 \le m \le M$ and if $i \ge I$ then \break $d(g_i \, h^m(x), h^m(x)) < R$.
\end{enumerate}
\end{enumeratecontinue}
The two forms of WWPD~\pref{ItemBadRayNo} are: 

$\bullet$~\pref{ItemBadRayNo}${}_\forall$ with ``\emph{For all} $x \in X$'' in~\pref{ItemBadRayUnmoved}; 

$\bullet$~\pref{ItemBadRayNo}${}_\exists$ with ``\emph{There exists} $x \in X$'' in~\pref{ItemBadRayUnmoved}. 

\noindent
Property WWPD~\pref{ItemBadRayNo}${}_\forall$ is applied in \cite[Section~5.2]{HandelMosher:BddCohomologyI} when setting up the proof of the ``WWPD Construction Theorem'', the main technical result of that paper.


The following proposition asserts the equivalence of the various forms of WWPD, and states some further properties of WWPD elements.

\begin{PropAndDef}[Characterization and Definition of WWPD]
\label{PropDefWWPD}
Given a loxodromic element $h \in \Gamma$ of the hyperbolic action $\Gamma \act X$, WWPD Properties \pref{ItemWWPDAcyl}, \pref{ItemEqDiscYes}, \pref{ItemWWPDQuasiAxes}, \pref{ItemBadRayNo}${}_\forall$, and \pref{ItemBadRayNo}$_{\exists}$ are all equivalent to each other. We define $h$ to be WWPD (with respect to the action $\Gamma \act X$) if $h$ is loxodromic and satisfies those properties.

In addition, if $h$ satisfies WWPD then the following hold:
\begin{enumeratecontinue}
\item\label{ItemWWPDNiceEnds}
$\Stab(\bdy_- h) = \Stab(\bdy_+ h) = \Stab(\bdy_\pm h)$. In particular, for every loxodromic element $\gamma \in \Gamma$, the set $\bdy \gamma$ is either equal to or disjoint from $\bdy h$. 
\item\label{ItemWWPDNoStabInd}
If $k \in \Gamma - \Stab(\bdy h)$ then $h$ and $khk^\inv$ are independent loxodromic elements.
\end{enumeratecontinue}
\end{PropAndDef}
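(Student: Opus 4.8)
My plan is to set up the cycle $\pref{ItemBadRayNo}_\forall \Rightarrow \pref{ItemBadRayNo}_\exists \Rightarrow \pref{ItemEqDiscYes} \Rightarrow \pref{ItemWWPDQuasiAxes} \Rightarrow \pref{ItemWWPDAcyl} \Rightarrow \pref{ItemBadRayNo}_\forall$, with a few arrows reversed if convenient. The implication $\pref{ItemBadRayNo}_\forall \Rightarrow \pref{ItemBadRayNo}_\exists$ is trivial since a ``for all $x$'' hypothesis on the forbidden sequence is weaker than ``there exists $x$'' — wait, I need to be careful about the logic here: WWPD~$\pref{ItemBadRayNo}$ says NO such sequence exists, and the sequence in the $\forall$-form satisfies a stronger condition $\pref{ItemBadRayUnmoved}$, so forbidding the $\exists$-sequences forbids more, giving $\pref{ItemBadRayNo}_\exists \Rightarrow \pref{ItemBadRayNo}_\forall$. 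I'll sort out the exact direction during writing; the substantive content is the same either way. For $\pref{ItemBadRayNo} \Leftrightarrow \pref{ItemEqDiscYes}$, I would argue contrapositively: a sequence $g_i$ with $\bdy_\pm(g_i h g_i^\inv) = g_i \cdot \bdy_\pm h$ converging to $\bdy_\pm h$ but in distinct $\Stab(\bdy_\pm h)$-cosets translates, via a Gromov-hyperbolicity/fellow-traveling argument along the quasi-axis, into exactly the condition that $g_i$ moves an initial segment $h^0(x),\dots,h^M(x)$ of the axis a bounded amount; conversely a bad sequence forces the translated endpoint pairs to accumulate at $\bdy_\pm h$.

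For the geometric core, $\pref{ItemEqDiscYes} \Leftrightarrow \pref{ItemWWPDQuasiAxes}$: if some $g \notin \Stab(\bdy h)$ has long projection of $g(\ell)$ onto $\ell$, then composing with powers of $h$ and of $ghg^\inv$ (ping-pong) one produces group elements pushing $g\cdot\bdy_\pm h$ arbitrarily close to $\bdy_\pm h$, violating discreteness; conversely, uniformly bounded projections is a standard ``WPD-like'' condition forcing the orbit of the pair to be discrete, via the fact that two quasi-axes with the same endpoint pair fellow-travel. The equivalence with $\pref{ItemWWPDAcyl}$ then follows because the ``equivariantly acylindrical'' condition is precisely the coset-counting reformulation of bounded projections along $\ell$ — the integer $M$ chosen large relative to $R$ and the hyperbolicity constant forces any $g$ satisfying $\pref{ItemAcylindrical}$ either to lie in $\Stab(\bdy_\pm h)$ or to have bounded projection, so only finitely many cosets occur.

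**The main obstacle** I anticipate is the $\pref{ItemWWPDQuasiAxes} \Rightarrow \pref{ItemEqDiscYes}$ direction (equivalently getting discreteness out of bounded projections), because one must rule out accumulation of $g_i \cdot \bdy_\pm h$ onto $\bdy_\pm h$ \emph{without} any control over $\Stab(\bdy h)$ — which is exactly what WWPD sacrifices relative to WPD. The trick is to show that if $g_i \cdot \bdy_\pm h \to \bdy_\pm h$ then for large $i$ the quasi-axis $g_i(\ell)$ fellow-travels a long segment of $\ell$, hence has large projection, so $g_i \in \Stab(\bdy h)$ for $i$ large; but then $g_i \cdot \bdy_\pm h = \bdy_\pm h$ (using $\pref{ItemWWPDNiceEnds}$-type reasoning that $\Stab(\bdy h)$ fixes the pair up to swap, and the swap is excluded by the convergence being to the ordered pair), so the orbit point is already $\bdy_\pm h$ and there is no genuine accumulation. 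Finally, for $\pref{ItemWWPDNiceEnds}$: any $\gamma \in \Stab(\bdy_+ h)$ that did not fix $\bdy_- h$ would, applied to $\ell$, produce a quasi-axis sharing only one endpoint with $\ell$ and hence with unbounded projection onto $\ell$, contradicting $\pref{ItemWWPDQuasiAxes}$ — unless $\gamma \in \Stab(\bdy h)$; combined with the index-$\le 2$ fact this forces the three stabilizers to coincide, and the ``equal or disjoint'' statement for $\bdy\gamma$ is then immediate since a shared endpoint would put a power of $\gamma$ in $\Stab(\bdy_\pm h)$ with the other endpoint fixed too. Statement $\pref{ItemWWPDNoStabInd}$ follows because if $h$ and $khk^\inv$ shared a boundary point then $k$ would move an endpoint of $h$ to an endpoint of $h$, i.e.\ $k \in \Stab(\bdy h)$ by $\pref{ItemWWPDNiceEnds}$, contrary to hypothesis.
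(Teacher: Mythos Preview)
Your overall shape is right and close to the paper's: contrapositive arguments, fellow-traveling of quasi-axes, and then deriving \pref{ItemWWPDNiceEnds}, \pref{ItemWWPDNoStabInd} from the established equivalences. Your arguments for \pref{ItemWWPDNiceEnds} and \pref{ItemWWPDNoStabInd} are fine (you use \pref{ItemWWPDQuasiAxes} where the paper uses \pref{ItemEqDiscYes}, but either works). However, you have misidentified where the real difficulty lies, and the step you gloss over is exactly the one the paper spends the most effort on.

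The direction you flag as the ``main obstacle,'' \pref{ItemWWPDQuasiAxes}$\Rightarrow$\pref{ItemEqDiscYes}, is actually short: if $g_i\cdot\bdy_\pm h\to\bdy_\pm h$ with $g_i\cdot\bdy_\pm h\ne\bdy_\pm h$, then $g_i(\ell)$ fellow-travels $\ell$ on longer and longer segments, forcing unbounded projection, so by \pref{ItemWWPDQuasiAxes} eventually $g_i\in\Stab(\bdy h)$; but then $g_i\cdot\bdy_\pm h$ is either $\bdy_\pm h$ (excluded) or the swapped pair (which does not converge to the ordered pair). No appeal to \pref{ItemWWPDNiceEnds}-type reasoning or to the structure of $\Stab(\bdy h)$ is needed.

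The genuine crux --- which your sketch hides inside phrases like ``via a Gromov-hyperbolicity/fellow-traveling argument'' and ``coset-counting reformulation of bounded projections'' --- is a \emph{synchronization} problem. Passing from the unparameterized conditions \pref{ItemEqDiscYes}/\pref{ItemWWPDQuasiAxes} to the parameterized conditions \pref{ItemWWPDAcyl}/\pref{ItemBadRayNo} requires turning ``$g(\ell)$ is Hausdorff close to $\ell$ on a long segment'' into ``$d(g\cdot h^m(x),h^m(x))<R$ for $0\le m\le M$.'' After adjusting $g$ by a power of $h$ you can arrange $g\cdot\ell(0)$ close to $\ell(0)$, but you still need $g\cdot\ell(m)$ close to $\ell(m)$, not merely close to \emph{some} point of $\ell$; since $\ell$ is only a quasi-geodesic this does not follow from Hausdorff closeness alone. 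In the paper this appears in the implication \pref{ItemWWPDAcyl}$\Rightarrow$\pref{ItemEqDiscYes} and is isolated as a separate ``Synchronization Property,'' proved by a careful thin-quadrilateral estimate. In your scheme the same issue surfaces either in $\neg\pref{ItemEqDiscYes}\Rightarrow\neg\pref{ItemBadRayNo}$ or in $\neg\pref{ItemWWPDQuasiAxes}\Rightarrow\neg\pref{ItemWWPDAcyl}$; whichever route you take, one such implication is unavoidable to close the cycle, and that is where the work is. Your other implications (and your sorting-out of \pref{ItemBadRayNo}${}_\exists\Rightarrow$\pref{ItemBadRayNo}${}_\forall$ as the trivial direction) are correct.
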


Before proving Proposition~\ref{PropDefWWPD}, we first derive some corollaries:

\begin{corollary} \label{CorollaryWPDDiscrete} 
A loxodromic $h \in \Gamma$ satisfies the individual WPD property if and only if $h$ satisfies WWPD and the subgroup $\Stab(\partial_\pm h) \subgroup \Gamma$ is virtually cyclic.
\end{corollary}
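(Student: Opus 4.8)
The plan is to prove the two implications separately, in each case passing through the equivalent form WWPD~\pref{ItemWWPDAcyl} provided by Proposition~\ref{PropDefWWPD}; the content of the corollary is that the individual WPD property decomposes as ``WWPD plus smallness of $\Stab(\bdy_\pm h)$''. For the forward direction, assume $h$ satisfies the individual WPD property. That $h$ satisfies WWPD~\pref{ItemWWPDAcyl}, hence is WWPD by Proposition~\ref{PropDefWWPD}, is immediate: given $x$ and $R$, the integer $M$ furnished by Definition~\ref{DefIndividualWPD} also witnesses WWPD~\pref{ItemWWPDAcyl}, since any $Z$ satisfying conditions~\pref{ItemAcylindrical} and~\pref{ItemDifferentCosets} in particular satisfies the displacement condition of Definition~\ref{DefIndividualWPD} and so is finite. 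That $\Stab(\bdy_\pm h)$ is virtually cyclic is precisely what is extracted in the proof of \cite[Proposition~6]{\BeFujiTag}; a self-contained argument would fix a quasi-axis $\ell$ of $h$ with fundamental domain length $T$ and basepoint $x = \ell(0)$, note that any $g \in \Stab(\bdy h)$ carries $\ell$ to a quasi-axis $g(\ell)$ with the same endpoint pair and hence within uniformly bounded Hausdorff distance of $\ell$, and use this to choose, in each coset of $\<h\>$ in $\Stab(\bdy_\pm h)$, a representative $g$ whose displacements $d(x,g(x))$ and $d(h^m(x),g h^m(x))$ are bounded by a constant $R_0$ independent of $m \ge 0$; taking $R = R_0 + 1$, the corresponding integer $M$ from WPD, and $Z$ the set of these representatives then forces $\Stab(\bdy_\pm h)/\<h\>$ to be finite, so $\Stab(\bdy_\pm h)$ is virtually cyclic.

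For the reverse direction, assume $h$ is WWPD and $V := \Stab(\bdy_\pm h)$ is virtually cyclic. The key preliminary observation is that $V$ acts properly discontinuously on $X$: a finite index subgroup $\<h^k\> \le V$ has quasi-isometrically embedded orbits and hence acts properly discontinuously, and proper discontinuity passes up from a finite index subgroup to the whole group; in particular, for any $x \in X$ and $R > 0$ only finitely many elements of $V$ move $x$ a distance less than $R$. Now fix $x$ and $R$, and let $M$ be the integer given by WWPD~\pref{ItemWWPDAcyl}. Let $Z \subset \Gamma$ be any subset satisfying the displacement condition~\pref{ItemAcylindrical} (but not necessarily~\pref{ItemDifferentCosets}). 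Selecting one element of $Z$ from each left coset of $V$ that meets $Z$ yields a subset still satisfying~\pref{ItemAcylindrical} and now also~\pref{ItemDifferentCosets}, which is therefore finite, so $Z$ meets only finitely many cosets of $V$; and within a single coset $g_0 V$, an element $g_0 s \in Z$ with $s \in V$ satisfies $d(g_0^\inv x, s(x)) < R$, so by proper discontinuity of $V$ only finitely many such $s$ occur. Hence $Z$ is finite, which is exactly the conclusion of Definition~\ref{DefIndividualWPD}.

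The only genuinely technical ingredient is the quasi-axis bookkeeping in the forward direction --- producing the uniformly-displaced coset representatives of $\Stab(\bdy_\pm h)/\<h\>$ --- and this is standard, resting only on the fact that two quasigeodesics in a hyperbolic space with the same ordered pair of endpoints are uniformly Hausdorff close; everything else is formal manipulation of the definitions together with proper discontinuity of the virtually cyclic stabilizer. If one instead simply cites \cite[Proposition~6]{\BeFujiTag} for the virtual cyclicity, then the corollary reduces entirely to the two one-line set-theoretic arguments above.
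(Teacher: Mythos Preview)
Your proof is correct and follows essentially the same approach as the paper. Both directions match: the forward direction cites \cite[Proposition~6]{\BeFujiTag} for virtual cyclicity and observes that WWPD~\pref{ItemWWPDAcyl} is an immediate weakening of WPD; the reverse direction uses WWPD~\pref{ItemWWPDAcyl} to reduce to finitely many cosets of $\Stab(\bdy_\pm h)$ and then uses loxodromicity of $h$ to get finiteness within each coset --- the paper does this by first passing to cosets of $\<h\>$, whereas you package the same step as ``proper discontinuity of the virtually cyclic stabilizer'', but the content is identical.
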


\begin{proof} Assuming $h$ satisfies individual WPD, clearly $h$ satisfies WWPD~\pref{ItemWWPDAcyl}. Also, the proof of \BeFuji\ Proposition~6 --- which uses only the individual WPD property for $h$ --- shows that $\Stab(\partial_\pm h) \subgroup \Gamma$ is virtually cyclic.

Suppose that $h$ satisfies WWPD~\pref{ItemWWPDAcyl} and that $\Stab(\bdy_\pm h)$ is virtually cyclic, and so the cyclic subgroup $\<h\> < \Stab(\bdy_\pm h)$ has finite index. Fixing $x \in X$ and $R>0$, consider for each integer $N>0$ the following set:
$$Z_N = \{g \in \Gamma \suchthat d(x,g(x)) < R \,\,\text{and}\,\, d(h^N(x),g h^N(x)) < R\}
$$ 
By WWPD~\pref{ItemWWPDAcyl}, there exists $N$ such that $Z_N$ intersects only finitely many left cosets of the subgroup $\Stab(\bdy_\pm h)$. Since $\Stab(\bdy_\pm h)$ is virtually cyclic, the cyclic subgroup $\<h\> < \Stab(\bdy_\pm h)$ has finite index, hence $Z_N$ intersects only finitely many left cosets of $\<h\>$. Since $h$ is loxodromic, each of its left cosets has finite intersection with $Z_N$. The set $Z_N$ is therefore finite, showing that $h$ satisfies WPD.
\end{proof}


The next corollary shows that, unlike WPD, the WWPD property behaves well with respect to pullback actions:

\begin{corollary}
\label{CorollaryWPDPullbackIsWWPD}
Consider an isometric group action $\A \from \Gamma \act X$ on a hyperbolic space with image subgroup $Q = \image(\A) \subgroup \Isom(X)$. For each $h \in \Gamma$ having image $q \in Q$, the element $h$ is a loxodromic WWPD element of the action $\Gamma \act X$ if and only if $q$ is a loxodromic WWPD element of the action $Q \act X$. 
\end{corollary}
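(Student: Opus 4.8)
The plan is to reduce everything to the formulation WWPD~\pref{ItemEqDiscYes} from Proposition~\ref{PropDefWWPD}, which is the form most transparently invariant under pullback. First I would record the observation that the action $\A \from \Gamma \act X$, by its very definition as a homomorphism $\Gamma \to \Isom(X)$, factors as $\Gamma \xrightarrow{\A} Q \hookrightarrow \Isom(X)$; consequently the extended topological action $\Gamma \act \overline X$ factors through $Q \act \overline X$ as well, the element $\gamma \in \Gamma$ and its image $\A(\gamma) \in Q$ acting by the identical homeomorphism of $\overline X$. In particular, for the chosen $h \in \Gamma$ with image $q \in Q$, the orbit map $k \mapsto h^k \cdot x$ coincides with $k \mapsto q^k \cdot x$ for every $x \in X$; since loxodromicity of an isometry is a property of this orbit map, $h$ is loxodromic with respect to $\Gamma \act X$ if and only if $q$ is loxodromic with respect to $Q \act X$, and when this holds the fixed point pairs agree, $\bdy_\pm h = \bdy_\pm q$ as points of $\bdy X \times \bdy X - \Delta$.

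Next, assuming $h$ (equivalently $q$) is loxodromic, I would invoke the characterization WWPD~\pref{ItemEqDiscYes}: $h$ is WWPD with respect to $\Gamma \act X$ exactly when the $\Gamma$-orbit of $\bdy_\pm h$ in $\bdy X \times \bdy X - \Delta$ is discrete, and $q$ is WWPD with respect to $Q \act X$ exactly when the $Q$-orbit of $\bdy_\pm q$ there is discrete. But these two orbits are literally the same subset of $\bdy X \times \bdy X - \Delta$: for $\gamma \in \Gamma$ one has $\gamma \cdot \bdy_\pm h = \A(\gamma) \cdot \bdy_\pm q$ since $\gamma$ and $\A(\gamma)$ act identically on $\overline X$, and as $\gamma$ ranges over $\Gamma$ its image $\A(\gamma)$ ranges over all of $Q$ by surjectivity of $\A$ onto $Q$; hence $\Gamma \cdot \bdy_\pm h = Q \cdot \bdy_\pm q$. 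Discreteness of one orbit is therefore equivalent to discreteness of the other, which yields the claimed equivalence and finishes the proof.

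There is no substantial obstacle here; the only real choice is to pick the right one of the several equivalent definitions of WWPD supplied by Proposition~\ref{PropDefWWPD}. A direct argument from WWPD~\pref{ItemWWPDAcyl} would be clumsy, because the coset-counting condition there is sensitive to $\kernel(\A)$ --- a single left coset of $\Stab(\bdy_\pm q)$ in $Q$ pulls back to many left cosets of $\Stab(\bdy_\pm h)$ in $\Gamma$ --- whereas the dynamical condition WWPD~\pref{ItemEqDiscYes} sees only the honest action on $\overline X$, and so descends to, and lifts from, the image group $Q$ with no such complication. This is also exactly the reason, as the corollary's preamble emphasizes, that WWPD behaves well under pullback while WPD does not.
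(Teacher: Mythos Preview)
Your proof is correct and follows essentially the same approach as the paper: both reduce to the characterization WWPD~\pref{ItemEqDiscYes}, observe that $h$ and $q$ act identically on $\overline X$ so that $\bdy_\pm h = \bdy_\pm q$ and $\Gamma \cdot \bdy_\pm h = Q \cdot \bdy_\pm q$, and conclude that discreteness of one orbit is equivalent to discreteness of the other. One small inaccuracy in your closing commentary (not in the proof itself): since $\Stab(\bdy_\pm h) = \A^{-1}(\Stab(\bdy_\pm q))$, left cosets of these stabilizers correspond \emph{bijectively} under $\A$, so a direct argument via WWPD~\pref{ItemWWPDAcyl} would in fact go through as well---your chosen route via~\pref{ItemEqDiscYes} is simply the cleanest.
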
 


\begin{proof} Since $q$ and $h$ induce the same homeomorphism of $\overline X$, it follows that $q$ is loxodromic if and only if $h$ is loxodromic. In that case $\bdy_\pm h = \bdy_\pm q$, and the orbits $H \cdot \bdy_\pm h$ and $Q \cdot \bdy_\pm q$ are the same subset of $\bdy X \times \bdy X - \Delta$. Applying WWPD~\pref{ItemEqDiscYes} using $q$ it follows first that the subset $H \cdot \bdy_\pm h = Q \cdot \bdy_\pm q$ is discrete in $\bdy X \times \bdy X - \Delta$, and it then follows that $h$ also satisfies WWPD~\pref{ItemEqDiscYes}.
\end{proof}

Hyperbolic group actions are classified by their dynamical behavior at infinity \cite[Section 8.3]{Gromov:hyperbolic} (see also \cite[Section 3]{Osin:AcylHyp} and \cite{Hamann:GroupActions}). There are five classes, two of which are \pref{ItemWWPDNonelementary} and~\pref{ItemWWPDAxial} in the following corollary. Two others have no loxodromic elements: an elliptic action has solely elliptic elements; and a parabolic action is not elliptic and has no loxodromic element, but it does have a point $\xi \in \bdy X$ that is fixed by each element of $\Gamma$. The fifth class of ``quasiparabolic actions'' will be ruled out in the following proof.

\begin{corollary}
\label{CorollaryActionClasses}
If the hyperbolic action $\Gamma \act X$ possesses a loxodromic WWPD element then one of the following holds:
\begin{enumerate}
\item\label{ItemWWPDNonelementary} The action is nonelementary, meaning that there exists an independent pair of loxodromic elements; or
\item\label{ItemWWPDAxial} The action is axial, meaning that there exist $\xi \ne \eta \in \bdy X$ such that each element of~$\Gamma$ preserves the set $\{\xi,\eta\}$.
\end{enumerate} 
\end{corollary}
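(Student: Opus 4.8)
The plan is to dispose of the corollary by a short dichotomy applied to the two-point set $\bdy h = \{\bdy_- h, \bdy_+ h\} \subset \bdy X$ attached to the given loxodromic WWPD element $h$, according to whether or not this set is invariant under the induced action of $\Gamma$ on $\bdy^2 X$. All the dynamical input that is needed has already been isolated in Proposition~\ref{PropDefWWPD}, specifically item~\pref{ItemWWPDNoStabInd}, so no appeal to the general structure theory of hyperbolic actions will be required.

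First I would treat the case in which $\bdy h$ is \emph{not} $\Gamma$-invariant. Then there is some $k \in \Gamma$ with $k \cdot \bdy h \ne \bdy h$, equivalently $k \notin \Stab(\bdy h)$. By Proposition~\ref{PropDefWWPD}\pref{ItemWWPDNoStabInd}, the element $khk^\inv$ is loxodromic and independent of $h$; that is, $\bdy h \intersect \bdy(khk^\inv) = \emptyset$, where of course $\bdy(khk^\inv) = k \cdot \bdy h$. Hence $\{h, khk^\inv\}$ is an independent pair of loxodromic elements, and conclusion~\pref{ItemWWPDNonelementary} holds.

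In the remaining case $\bdy h$ is $\Gamma$-invariant. Writing $\{\xi,\eta\} = \bdy h$ with $\xi = \bdy_- h \ne \bdy_+ h = \eta$ (distinct because $h$ is loxodromic), this says precisely that every element of $\Gamma$ preserves the set $\{\xi,\eta\}$, which is conclusion~\pref{ItemWWPDAxial}; equivalently, the action is axial, with the loxodromic element $h$ itself witnessing axiality.

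I do not expect a serious obstacle here, since the essential content has been pushed into Proposition~\ref{PropDefWWPD}\pref{ItemWWPDNoStabInd}; the only point worth flagging is that this argument silently subsumes the Gromov classes mentioned in the paragraph preceding the corollary. An action possessing a loxodromic element is neither elliptic nor parabolic, and the dichotomy above shows it cannot be quasiparabolic either: a quasiparabolic action has a unique global fixed point at infinity, hence no $\Gamma$-invariant two-point subset of $\bdy X$ and no independent pair of loxodromic elements, which is incompatible with both outcomes of the dichotomy. One could alternatively exclude the quasiparabolic case directly from WWPD~\pref{ItemEqDiscYes}, by arguing that the $\Gamma$-orbit of $\bdy_\pm h$ accumulates on a translate of the non-fixed endpoint and so fails to be discrete; but that route requires minimality of the action on the limit set and is longer than the conjugation argument, so I would not pursue it.
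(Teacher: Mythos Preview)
Your proof is correct, and it takes a genuinely different route from the paper's. The paper invokes Gromov's five-class classification of hyperbolic actions, observes that the existence of a loxodromic element rules out the elliptic and parabolic classes, and then rules out the quasiparabolic class by a direct argument using Proposition~\ref{PropDefWWPD}\pref{ItemWWPDNiceEnds}: in a quasiparabolic action the globally fixed point $\xi$ lies in $\bdy\gamma$ for every loxodromic $\gamma$, and one produces an element stabilizing one endpoint of the WWPD element but not the other, contradicting~\pref{ItemWWPDNiceEnds}.

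Your argument bypasses the classification entirely, splitting instead on whether $\bdy h$ is $\Gamma$-invariant and invoking Proposition~\ref{PropDefWWPD}\pref{ItemWWPDNoStabInd} in the noninvariant case. This is shorter and more self-contained; the only cost is that item~\pref{ItemWWPDNoStabInd} is itself derived from~\pref{ItemWWPDNiceEnds} in the proof of Proposition~\ref{PropDefWWPD}, so the two approaches ultimately rest on the same underlying fact. Your closing remarks about quasiparabolicity are accurate but unnecessary for the argument, as you note.
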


\begin{proof} It remains only to rule out that the action $\Gamma \act X$ is \emph{quasiparabolic} \cite[Section 8.3]{Gromov:hyperbolic}, defined by existence of $\xi \in \bdy X$ fixed by every element of $\Gamma$, and existence of loxodromic elements $g_1,g_2 \in \Gamma$ such that $\bdy_+ g_1 \intersect \bdy_+ g_2 = \{\xi\}$. Note that $\xi \in \bdy g$ for every loxodromic $g \in \Gamma$. 

Consider any loxodromic WWPD element $\gamma \in \Gamma$. By inverting $\gamma$ we may assume $\xi = \bdy_+\gamma$. Using quasiparabolicity, there exists $i \in \{1,2\}$ such that $\bdy_\pm\gamma \ne \bdy_\pm g_i$, and hence $g_i \not\in \Stab(\bdy_\pm\gamma)$, but $g_i \in \Stab(\bdy_+\gamma)$, contradicting Proposition~\ref{PropDefWWPD}~\pref{ItemWWPDNiceEnds}.
\end{proof}

For the rest of the section we turn to:

\begin{proof}[Proof of Proposition~\ref{PropDefWWPD}] First we set up some notation.

Fix a base point $O \in X$, connect $O$ to $h(O)$ by a linearly reparameterized geodesic $\ell \from [0,1] \to X$, and extend  to a continuous quasi-axis $\ell \from \reals \to X$ for $h$ satisfying $\ell(s+m) = h^m(\ell(s))$ for all $s \in \reals$ and $m \in \Z$, in particular $\ell(m)=h^m(O)$. 

We choose $k \ge 1$, $c \ge 0$ so that $\ell$ is a $k,c$-quasigeodesic, and so that any two points of $\overline X = X \union \bdy X$ are the endpoints at infinite of some $k,c$-quasigeodesic.

A choice of $k,c$ quasigeodesic in $X$ with endpoints $x,y \in \overline X = X \union \ \bdy X$ will be denoted $[x,y]$ (abusing notation in the case that $x$ or $y$ is in $\bdy X$ and hence is not a point on the quasigeodesic). For example when $g \in \Gamma$ and $a < b \in \reals$ are understood by context then, letting $x = g\ell(a)$ and $y=g\ell(b)$, the $k,c$-quasigeodesic $g \ell \restrict [a,b]$ is denoted $[x,y]$. In particular $g\ell$ decomposes as a bi-infinite concatenation of fundamental domains of the form
$$\cdots [gh^{-2}(O),gh^{-1}(O)] * [gh^{-1}(O),gh^0(O)] * [gh^0(O),gh^1(O)] * [gh^1(O),gh^2(O)] * \cdots
$$
Each of these fundamental domains is a linearly reparameterized geodesic whose diameter equals its length which equals $d(O,h(O))$. 


In contrast to the quasigeodesic notation $[x,y]$, we use $\overline{PQ}$ to denote a choice of geodesic in $X$ with endpoints $P,Q \in X$. 

\medskip

We turn now to the proof of equivalence of WWPD Properties \pref{ItemWWPDAcyl}, \pref{ItemEqDiscYes}, \pref{ItemWWPDQuasiAxes}, \pref{ItemBadRayNo}${}_\forall$, and \pref{ItemBadRayNo}$_{\exists}$.

\medskip

\textbf{Equivalence of WWPD~\pref{ItemEqDiscYes} and~\pref{ItemWWPDQuasiAxes}.} If WWPD~\pref{ItemEqDiscYes} fails then there is a sequence $g_i \in \Gamma - \Stab(\bdy_\pm h)$ such that $g_i (\bdy_\pm h) = g_i (\bdy_\pm \ell)$ converges to $\bdy_\pm h = \bdy_\pm \ell$. It follows that there are sequences $x_i,y_i \in \ell$ such that in $\overline X$ we have $x_i \to \bdy_- h$, $y_i \to \bdy_+ h$, and $x_i,y_i$ are within uniformly bounded distance of $g_i(\ell)$. The image of any closest point map $\pi_i \from g_i(\ell) \mapsto \ell$ therefore has points within uniformly bounded distance of $x_i,y_i$, and since $d(x_i,y_i)\to+\infinity$ it follows that $\text{diam}(\image(\pi_i)) \to +\infinity$, and so WWPD~\pref{ItemWWPDQuasiAxes} fails for the quasi-axis~$\ell$.

Conversely suppose that WWPD~\pref{ItemWWPDQuasiAxes} fails for some quasi-axis of $h$.  Since any two quasi-axes of $h$ have finite Hausdorff distance, it easily follows that WWPD~\pref{ItemWWPDQuasiAxes} fails for every quasi-axis of~$h$, in particular it fails for the quasi-axis~$\ell$. Thus there is a sequence $g_i \in \Gamma - \Stab(\bdy h)$ such that, letting $\pi_i \from g_i(\ell) \to \ell$ be a closest point map, the diameter of $\image(\pi_i)$ goes to $+\infinity$ with~$i$. After postcomposing each $g_i$ with an appropriate power of $h$, there is a subsegment $[x_i,y_i]$ of $\ell$ such that $x_i \to \bdy_- h$ and $y_i \to \bdy_+ h$ and such that $\image(\pi_i)$ contains a subsegment that is uniformly Hausdorff close to $[x_i,y_i]$. It follows that the sequence of unordered pairs $g_i(\bdy h) \ne \bdy h$ limits to the unordered pair $\bdy h$ in $\bdy^2 X$, proving that WWPD~\pref{ItemEqDiscYes} fails.

\medskip\textbf{Equivalence of WWPD~\pref{ItemBadRayNo}${}_\forall$  and~\pref{ItemBadRayNo}${}_\exists$.} 
To prove that implication \pref{ItemBadRayNo}${}_{\exists} \implies$~\pref{ItemBadRayNo}${}_{\forall}$ holds, suppose that \pref{ItemBadRayNo}${}_{\forall}$ is false, so there is an infinite sequence $g_1,g_2,g_3,\ldots \in \Gamma$ that satisfies \pref{ItemBadRayDiffCosets}, and that satisfies \pref{ItemBadRayUnmoved} \emph{for all} $x \in X$, hence it satisfies \pref{ItemBadRayUnmoved} \emph{for some} $x \in X$. 

For the converse implication \pref{ItemBadRayNo}${}_{\forall} \implies$~\pref{ItemBadRayNo}${}_{\exists}$, suppose that \pref{ItemBadRayNo}${}_{\exists}$ is false, and so there is an infinite sequence $g_1,g_2,g_3,\ldots \in \Gamma$ that satisfies \pref{ItemBadRayDiffCosets} and that satisfies \pref{ItemBadRayUnmoved} \emph{for some} $x' \in X$. Thus there exists $R' > 0$ such that for all $M \ge 0$ there exists $I \ge 0$ such that if $0 \le m \le M$ and if $i \ge I$ then 
$$d(g_i h^m(x'),h^m(x')) < R'
$$ 
For any $x \in X$, let $R = R' + 2 d(x,x')$. For any $M \ge 0$, choosing $I \ge 0$ so that $0 \le m \le M$ and $i \ge I$ together imply that the previous inequality holds, it follows~that
\begin{align*}
d(g_i h^m(x),h^m(x)) &\le d(g_i h^m(x),g_i h^m(x')) + d(g_i h^m(x'),h^m(x')) + d(h^m(x'),h^m(x)) \\
  &\le d(x,x') + R' + d(x',x) \\
  &\le R
\end{align*}
This shows that \pref{ItemBadRayUnmoved} is satisfied \emph{for all} $x \in X$, hence  \pref{ItemBadRayNo}${}_{\forall}$ is false.

\medskip

It remains to prove the chain of implications \pref{ItemBadRayNo}${}_{\exists} \implies$\pref{ItemWWPDAcyl} $\implies$ \pref{ItemEqDiscYes} $\implies$ \pref{ItemBadRayNo}${}_{\exists}$. 


\medskip

\textbf{WWPD~\pref{ItemBadRayNo}${}_\exists \!\!\implies$\!WWPD~\pref{ItemWWPDAcyl}.} Suppose that $h$ fails to satisfy WWPD~\pref{ItemWWPDAcyl}, so there exists $x \in X$ and $R'>0$, and for every $M \ge 1$ there exists an infinite subset $Z_M \subset \Gamma$ satisfying: (a) for each $g \in Z_M$ we have $d(x,g(x)) < R'$ and $d(h^M(x),g h^M(x)) < R'$; and (b) no two elements of $Z_M$ lie in the same left coset of $\Stab(\bdy_\pm h)$. 
%

Since each $g \in Z_M$ moves the points $x=h^0(x)$ and $h^{M}(x)$ a distance at most $R'$, by an elementary argument in Gromov hyperbolic geometry there exists a constant $R$ depending only on $R'$, $k$, $c$, and $\delta$ such that all points on the quasi-axis between $x=h^0(x)$ and $h^{M}(x)$ are moved by each $g \in Z_M$ a distance at most~$R$. In particular, if $g \in Z_M$ and if $0 \le m \le M$ then $d(g h^m(x),h^m(x)) < R$. Construct a sequence $(g_i)$ by induction so that $g_i \in Z_i$ and so that no two of $g_1,\ldots,g_i$ are in the same left coset of $\Stab(\bdy_\pm h)$. For all $M \ge 0$, letting $I = M$, it follows that if $0 \le m \le M$, and if $i \ge I=M$, then $d(g_i h^m(x),h^m(x)) < R$, proving that $h$ fails to satisfy WWPD~\pref{ItemBadRayNo}${}_\exists$.


\medskip

\textbf{WWPD~\pref{ItemEqDiscYes}$\implies$WWPD~\pref{ItemBadRayNo}${}_\exists$.} Assuming that WWPD~\pref{ItemBadRayNo}${}_\exists$ fails, we may pick an infinite sequence of elements $f_1,f_2,f_3,\ldots$ lying in different left cosets of $\Stab(\bdy_\pm h)$, and we may pick $x \in X$ and $R > 0$, so that the following holds: for all $M \ge 0$ there exists $I \ge 0$ such that if $0 \le m \le M$ and if $i \ge I$ then $d(f_i h^m(x),h^m(x)) < R$.
%
%
%
It follows that there exists a growing sequence of positive even integers $m_i=2k_i \to +\infinity$ such that for each $i$ the isometry $f_i$ moves each of the points $x, h(x),h^2(x)\ldots,h^{2k_i}(x)$ a distance at most $R$. Let $g_i = h^{-k_i} f_i h^{k_i}$, and note that $g_i$ moves each of the points $h^{-k_i}(x), h^{-k_i+1}(x),\ldots,h^{k_i-1}(x), h^{k_i}(x)$ a distance at most~$R$. Since the sequence $k \mapsto h^k(x)$ defines a quasi-isometric embedding $\Z \mapsto X$, it follows that the sequence $(h^{-k_i}(x),h^{k_i}(x))$ converges to $\bdy_\pm h$ in the space of ordered pairs of points in $X \union \bdy X$, so the sequence $(g_i(h^{-k_i}(x)),g_i(h^{k_i}(x)))$ converges to $\bdy_\pm h$, and therefore the sequence $\bdy_\pm (g_i h g^\inv_i) = g_i(\bdy_\pm h)$ converges to $\bdy_\pm h$. We may assume, dropping at most a single term of the sequence, that $f_i \not\in \Stab(\bdy_\pm h)$ for each~$i$, and hence $g_i \not\in \Stab(\bdy_\pm h)$. Since $g_i(\bdy_\pm h) \ne \bdy_\pm h$ but $g_i(\bdy_\pm h)$ converges to $\bdy_\pm h$, we have proved that $h$ does not have equivariantly discrete fixed set and so WWPD~\pref{ItemEqDiscYes} fails.

\medskip\textbf{WWPD~\pref{ItemWWPDAcyl}$\implies$WWPD~\pref{ItemEqDiscYes}.} 
This implication has a longer proof than any of the previous ones.
The main idea is that although failure of WWPD~\pref{ItemEqDiscYes} easily guarantees that an arbitrarily long segment of the quasi-axis $\ell$ is Hausdorff close to a segment of some translate $g\ell$, in order to prove failure of WWPD~\pref{ItemWWPDAcyl} one needs those two segments to be synchronized, in a sense that we formalize as the ``Synchronization Property'' below. After stating and proving that property, we will apply it to prove the implication WWPD~\pref{ItemWWPDAcyl}$\implies$WWPD~\pref{ItemEqDiscYes}.

Let $L=L(k,c,\delta) \ge 0$ be a thinness constant for $k,c$ quasigeodesic quadrilaterals in the $\delta$-hyperbolic space $X$, with finite or infinite endpoints. What this means is that for any $w,x,y,z \in \overline X$ and any $k,c$ quasigeodesics $[w,x]$, $[x,y]$, $[y,z]$, $[z,w]$ in $X$, the quasigeodesic $[w,x]$ is contained in the $L$-neighborhood of the union of the other three quasigeodesics 
$$[x,y] \union [y,z] \union [z,w]
$$

Note that if $P \in X$ has distance $\le D$ from some point on $g\ell[i,j] = [gh^i(O),gh^j(O)]$ for some integers $i \le j$, then $P$ has distance $\le D+d(O,h(O))$ from $g\ell(n)$ for some integer $n \in [i,j]$, because each fundamental domain of $g\ell$ is a geodesic segment of the form $[gh^{m-1}(O),gh^m(O)]$ having length equal to $d(O,h(O))$. For the particular case where $D=L$, we set 
$$L_1=L+d(O,h(O))
$$

\begin{description}
\item[Synchronization Property:] There exists a constant $L_2  = L_2(k,c,\delta) > 0$ such that for all $g \in \Gamma$ and all integers $M, N \ge 0$, if $d(O,g O) \le L_1$, and if the asynchronous bound $d(h^M(O),g h^N(O)) \le L_1$ holds, then for all integers \hbox{$0 \le m \le \max\{M,N\}$} the synchronous bound $d(h^m(O),g h^m(O)) \le L_2$ holds.
\end{description}
For the proof we may assume $M \ge N$, because the opposite case where $M \le N$ can be reduced to the case $M \ge N$ by replacing $g$ with $g^\inv$. 

Consider the quadrilateral having the $k,c$-quasigeodesic 
$$\ell \restrict [0,M]= [O,h^M(O)] = [O,h^m(O)] * [h^m(O),h^M(O)]
$$ 
on one side, geodesics $\overline{O,gO}$ and $\overline{gh^N(O) , h^M(O)}$ on the two adjacent sides, and the $k,c$-quasigeodesic $g\ell \restrict [0,N] = [gO,gh^N(O)]$ on the opposite side. The point $h^m(O)$ is contained in the $L$ neighborhood of the union 
$$\overline{O,gO} \union [gO,gh^N(O)] \union \overline{gh^N(O) , h^M(O)}
$$
Since each of the two geodesics $\overline{O,gO}$ and $\overline{gh^N(O) h^M(O)}$ have length $\le L_1$ it follows that $h^m(O)$ is contained with the $L+L_1$ neighborhood of some point of $[gO,gh^N(O)]$, hence there exists an integer $n \in [0,N]$ such that 
$$d(h^m(O),g h^n(O)) = d(h^m(O),g\ell(n)) \le L + L_1 + d(O,h(O)) = 2L_1
$$

Next we use this inequality to find an upper bound to $\abs{m-n}$. For this purpose we may again assume we are in the case $m \ge n$, because the opposite case can be reduced to this one by replacing $g$ with $g^\inv$. Denote
$$P = h^n(O), \,\,\,\,  Q = h^m(O), \,\,\,\, R = g(O), \,\,\,\,  S = g h^n(O)
$$
Consider the quadrilateral having on one side the $k,c$-quasigeodesic 
$$[O,Q] = \ell \restrict [0,m] = \bigl(\ell \restrict [0,n]\bigr) * \bigl(\ell \restrict [n,m]\bigr) = [O,P] * [P,Q]
$$
and having geodesics $\overline{OR}$, $\overline{RS}$, $\overline{SQ}$ on the other three sides. It follows that $P$ has distance $\le L$ from the union $\overline{OR} \union \overline{RS} \union \overline{SQ}$. Since $\Length(\overline{OR})=d(O,R) \le L_1$ and $\Length(\overline{SQ})=d(S,Q) \le 2L_1$, it follows that there is a point $U \in \overline{RS}$ such that $d(P,U) \le L + 2L_1$.
We now have
\begin{align*}
d(R,U) + d(U,S) & = d(R,S) = d(O,P) \\ &\le d(O,R) + d(R,U) + d(U,P) \\
d(U,S) &\le d(O,R) + d(U,P) \\
  &\le L_1 + (L + 2L_1) = L + 3L_1\\
d(P,Q) &\le d(P,U) + d(U,S) + d(S,Q) \\
  &\le  (L+2L_1) + (L + 3L_1) + 2L_1 = 2L + 7L_1\\
\abs{m-n}  &\le k \, d(\ell(m),\ell(n)) + kc = k \, d(Q,P) + kc \\
  &\le  k(2L+7L_1)+kc
\end{align*}
and so
\begin{align*}
d(h^m(O),g h^m(O)) &\le d(h^m(O),gh^n(O)) + d(gh^n(O),gh^m(O)) \\
&\le 2L_1 + d(g\ell(n),g\ell(m)) \\
&\le 2L_1 + k \abs{m-n} + c \\
&\le 2L_1 + k \bigl(k(2L+7L_1)+kc\bigr)  + c = L_2
\end{align*}

\bigskip

We turn now to the proof that WWPD~\pref{ItemWWPDAcyl}$\implies$WWPD~\pref{ItemEqDiscYes}. Assuming that $WWPD~\pref{ItemEqDiscYes}$ fails, there is a sequence $(f_i)_{i \ge 0}$ in the group $\Gamma$ such that $\bdy_\pm h \ne f_i(\bdy_\pm h) = \bdy_\pm(f_i h f_i^\inv)$ and such that $f_i(\bdy_- h) \to \bdy_- h$ and $f_i(\bdy_+ h) \to \bdy_+ h$ as $i \to +\infinity$. Notice that for some values of $i$ we might have $f_i(\bdy_- h) = \bdy_- h$ and for others $f_i(\bdy_+ h)=\bdy_+ h$, although those cannot happen for the same value of~$i$. 

After extracting a subsequence of $f_i$ we may assume that the following three things hold. First, if $i \ne j$ then $f_i(\bdy_\pm h) \ne f_j(\bdy_\pm h)$, and hence $f_i^\inv f_j \not\in \Stab(\bdy_\pm h)$. Second (and after possibly inverting $h$) the point $\bdy_+(f_i h f_i^\inv) = f_i(\bdy_+ h)$ is distinct from both of the points $\bdy_+ h$ and $\bdy_- h$. Third, there are integer sequences $a_i \to -\infinity$, $b_i \to +\infinity$, and $c_i < d_i$ such that 
\begin{equation}\label{EquationLPlusDelta}
d(\ell(a_i ),f_i\ell(c_i )) \le L_1
\end{equation} 
and 
\begin{equation}\label{EquationLOneAlone}
d(\ell(b_i ),f_i\ell(d_i )) \le L_1
\end{equation}
The existence of these sequences is a consequence of the thinness property for $k,c$-quasigeodesic quadrilaterals, applied to the sequence of quadrilaterals having $\ell$ on one side with ideal endpoints $\bdy_- h$, $\bdy_+ h$, having $f_i\ell$ on the opposite side with ideal endpoints $f_i(\bdy_- h)$, $f_i(\bdy_+h)$ on the opposite side, and having as its other two sides a $k,c$-quasigeodesic with ideal endpoints $\bdy_- h, f_i \bdy_- h$ and another with ideal endpoints $\bdy_+ h, f_i \bdy_+ h$. 

Let $g_i = h^{-a_i} f_i h^{c_i}$. We next prove, after passing to a subsequence, that for any $i \ne j \ge 0$ the elements $g_i,g_j$ lie in different left cosets of $\Stab(\bdy_\pm h)$, and so the set $\{g_i\}_{i \ge 0}$ is infinite and it satisfies WWPD~\pref{ItemDifferentCosets}. Fixing $j$ it suffices to prove that for all sufficiently large $i$ we have $g_i^\inv g_j \not\in \Stab(\bdy_+ h)$. Consider those values of $i$ for which this fails to hold, equivalently:
\begin{align*}
g_i^\inv g_j = h^{-c_i} f_i^\inv h^{a_i-a_j} f_j h^{c_j} &\in \Stab(\bdy_+ h) \\
f_i^\inv h^{a_i-a_j} f_j &\in \Stab(\bdy_+ h) \\
h^{a_i-a_j}(f_j(\bdy_+ h)) &= f_i(\bdy_+ h)
\end{align*}
On the left hand side of the last equation,  $f_j(\bdy_+ h) \ne \bdy_+ h$, and since $a_i - a_j \to -\infinity$ as $i \to +\infinity$, it follows by source-sink dynamics that $h^{a_i-a_j}(f_j(\bdy_+ h)) \to \bdy_- h$. But on the right hand side we have $f_i(\bdy_+ h) \to \bdy_+ h$. 
The last equation is therefore impossible for all sufficiently large~$i$.

To finish the proof that $h$ fails to satisfy WWPD~\pref{ItemWWPDAcyl}, letting $x=O$, we must exhibit $R > 0$ such that for any $M \ge 1$ the set $\{g_i\}_{i \ge 0}$ has an infinite subset $Z$ each of whose elements satisfies the two inequalities of WWPD~\pref{ItemAcylindrical}, which in our current notation become:
\begin{description}
\item[WWPD~(\ref{ItemAcylindrical})(i):] $d(O,g_i(O)) < R$
\item[WWPD~(\ref{ItemAcylindrical})(ii):] $d(h^M(O),g_i h^M(O)) < R$
\end{description}
Applying Equation~\pref{EquationLPlusDelta} we have 
$$d\bigl(O,g_i(O)\bigr) = d\bigl(h^{a_i}(O), f_i h^{c_i}(O)\bigr)
   = d\bigl( \ell(a_i ), f_i\ell(c_i ) \bigr) 
   \le L_1
$$
which shows that inequality WWPD~(\ref{ItemAcylindrical})(i) holds for any $R  > L_1$. 

To pursue inequality WWPD~(\ref{ItemAcylindrical})(ii), letting $B_i = b_i - a_i$ and $D_i = d_i - c_i$, and applying equation~\pref{EquationLOneAlone}, we have
\begin{align*}
d(h^{B_i}(O),g_i h^{D_i}(O)) &= d\bigl( h^{b_i-a_i}(O), h^{-a_i} f_i h^{c_i}(h^{d_i-c_i}(O)) \bigr)    \\
   &= d\bigl( h^{b_i}(O),f_i h^{d_i}(O) \bigr) \\
   &= d(\ell(b_i),f_i \ell(d_i )) \\
   &\le L_1
\end{align*}
Applying the Synchronization Property it follows that for all integers $m \in \max\{B_i,D_i\}$ we have
$$d(h^m(O),g_i h^m(O)) \le L_2
$$
We know that $B_i = b_i-a_i \to +\infty$ as $i \to +\infty$. Given $M$ we can find $I$ such that if $i \ge I$ then $\max\{B_i,D_i\} \ge M$, and hence the inequality $d(h^m(O),g_i h^m(O)) \le L_2$ is true for all integers $m \in [0,M]$ and all $i \ge I$. Letting $Z = \{g_i \suchthat i \ge I\}$, it follows that WWPD~(\ref{ItemAcylindrical})(ii) holds for any $R  > L_2$. Setting $R = 1 + \max\{L_1,L_2\}$, we are done.

\medskip

This completes the proof of equivalence of WWPD versions~\pref{ItemWWPDAcyl},\! \pref{ItemEqDiscYes},\! \pref{ItemWWPDQuasiAxes},\! \pref{ItemBadRayNo}${}_\forall$ and~\pref{ItemBadRayNo}${}_\exists$.

\medskip

To prove the first sentence of \pref{ItemWWPDNiceEnds} it suffices by symmetry it suffices to prove that $\Stab(\bdy_- h) = \Stab(\bdy_\pm h)$, so given $\gamma$ such that $\gamma(\bdy_- h) = \bdy_- h$ we must prove that $\gamma(\bdy_+ h) = \bdy_+ h$. If not then $\bdy_-(\gamma h \gamma^\inv) = \gamma(\bdy_- h) =\bdy_- h$ and $\bdy_+(\gamma h \gamma^\inv) = \gamma(\bdy_+ h) \ne \bdy_+ h$. Using source sink dynamics of $h$ acting on $\bdy X$ it follows that the sequence $h^n(\bdy_\pm (\gamma h \gamma^\inv)) = \bdy_\pm( (h^n \gamma) h (h^n \gamma)^\inv)$ converges to $\bdy_\pm h$ as $n \to +\infinity$, but the terms of that sequence are all distinct from $\bdy_\pm h$, contradicting that $h$ has equivariantly discrete fixed points. 

To prove the second sentence of~\pref{ItemWWPDNiceEnds}, consider the set $\bdy\gamma = \{\bdy_-\gamma,\bdy_+\gamma\}$, and suppose that $\bdy\gamma \intersect \bdy h \ne \emptyset$. By replacing $\gamma$ and/or $h$ with their inverses we may assume that $\bdy_+ \gamma = \bdy_+ h$. From the first sentence it follows that $\gamma$ fixes the point $\bdy_- h$. However, $\bdy_-\gamma$ and $\bdy_+\gamma$ are the only two points of $\bdy X$ that are fixed by $\gamma$, and hence $\bdy_-\gamma=\bdy_- h$.

To prove \pref{ItemWWPDNoStabInd}, consider $k \in \Gamma - \Stab(\bdy h)$ and suppose that $khk^\inv$ is not independent of~$h$, so the set $\bdy h \cap k(\bdy h) = \bdy h \cap \bdy(k h k^\inv)$ consists of a single point. If $\bdy_- h = k(\bdy_- h)$ or if $\bdy_+ h = k(\bdy_+ h)$ then either of the elements $k$ or $khk^\inv$ gives an immediate contradiction to \pref{ItemWWPDNiceEnds}. If $\bdy_- h = k(\bdy_+ h) = \bdy_+(k h k^\inv) = \bdy_-(k h^\inv k^\inv)$ then $k h^\inv k^\inv$ gives a contradiction to~\pref{ItemWWPDNiceEnds}, and similarly if $\bdy_+ h = k(\bdy_- h)$.
\end{proof}

\subsection{Equivalence $g \sim h$ in a hyperbolic group action}
\label{SectionSim}

Given two paths $\gamma \from [a,b] \to X$ and $\rho \from [a',b'] \to X$ in a metric space $X$, we say that \emph{$\gamma,\rho$ are $L$-Hausdorff close rel endpoints} if $d(\gamma(a),\rho(a')) \le L$, $d(\gamma(b),\rho(b')) \le L$, and the Hausdorff distance between the images $\gamma[a,b]$ and $\rho[a',b']$ is $\le L$.

In the next definition we follow \cite[Section 1]{\BeFujiTag}:

\begin{definition}\label{DefinitionSim}
Define an equivalence relation $g \sim h$ on $\Gamma$ to mean: either $g=h$; or $g$ and $h$ are both loxodromic, and for some (any) quasi-axes $\gamma_g$ and $\gamma_h$ there exists $L$ such that for any subsegment $\gamma_g[a,b]$ there is a subsegment $\gamma_h[c,d]$ and an element $k \in \Gamma$ such that $k(\gamma_g[a,b])$ is $L$-Hausdorff close rel endpoints to $\gamma_h[c,d]$.
\end{definition}

For each $g \in \Gamma$ the orbit of $\bdy_\pm g$ under the action $\Gamma \act \bdy S \times \bdy S - \Delta$ is denoted $\Gamma \cdot \bdy_\pm g$, and its closure in $\bdy S \times \bdy S - \Delta$ is denoted $\overline{\Gamma \cdot \bdy_\pm g}$.


\begin{lemma}
\label{LemmaSimEquivProps}
Given loxodromic $g,h \in \Gamma$, the following are equivalent:
\begin{enumerate}
\item\label{LemmaSimEquivSim}
$g \sim h$
\item\label{LemmaSimEquivEq}
$\overline{\Gamma \cdot \bdy_\pm g} = \overline{\Gamma \cdot \bdy_\pm h}$
\item\label{LemmaSimEquivInt}
$\overline{\Gamma \cdot \bdy_\pm g} \intersect \overline{\Gamma \cdot \bdy_\pm h} \ne \emptyset$
\end{enumerate}
If in addition $g$ satisfies WWPD then the following condition is also equivalent:
\begin{enumeratecontinue}
\item\label{LemmaSimWWPD}
$\Gamma \cdot \bdy_\pm g = \Gamma \cdot \bdy_\pm h$.
\end{enumeratecontinue}
\end{lemma}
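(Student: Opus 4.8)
The plan is to prove the cycle of implications $\pref{LemmaSimEquivSim} \implies \pref{LemmaSimEquivEq} \implies \pref{LemmaSimEquivInt} \implies \pref{LemmaSimEquivSim}$ among the first three conditions, and then, under the added WWPD hypothesis on $g$, to prove $\pref{LemmaSimEquivEq} \implies \pref{LemmaSimWWPD}$ and the trivial $\pref{LemmaSimWWPD} \implies \pref{LemmaSimEquivInt}$. The implication $\pref{LemmaSimEquivEq} \implies \pref{LemmaSimEquivInt}$ is immediate since $\overline{\Gamma \cdot \bdy_\pm g}$ is nonempty (it contains $\bdy_\pm g$), and $\pref{LemmaSimWWPD} \implies \pref{LemmaSimEquivInt}$ is equally immediate. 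So the real content is the three implications $\pref{LemmaSimEquivSim} \implies \pref{LemmaSimEquivEq}$, $\pref{LemmaSimEquivInt} \implies \pref{LemmaSimEquivSim}$, and $\pref{LemmaSimEquivEq} \implies \pref{LemmaSimWWPD}$.

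For $\pref{LemmaSimEquivSim} \implies \pref{LemmaSimEquivEq}$: fix quasi-axes $\gamma_g$, $\gamma_h$ and the constant $L$ from Definition~\ref{DefinitionSim}. To show $\overline{\Gamma\cdot\bdy_\pm g} \subset \overline{\Gamma\cdot\bdy_\pm h}$, take a sufficiently long subsegment $\gamma_g[-n,n]$, translate by $k_n \in \Gamma$ so that $k_n(\gamma_g[-n,n])$ is $L$-Hausdorff close rel endpoints to a subsegment $\gamma_h[c_n,d_n]$; then precompose $k_n$ with an appropriate power of $h$ to center this subsegment of $\gamma_h$, so that $c_n \to -\infty$ and $d_n \to +\infty$. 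A standard quasigeodesic-stability argument shows $k_n(\bdy_\pm g)$ then converges to $\bdy_\pm h$, so $\bdy_\pm h \in \overline{\Gamma\cdot\bdy_\pm g}$, and since both orbit closures are $\Gamma$-invariant and closed, one containment follows; by symmetry of $\sim$ we get equality. For $\pref{LemmaSimEquivInt} \implies \pref{LemmaSimEquivSim}$: pick a point $\xi = (\xi_-,\xi_+)$ in the intersection of the two closures, with $\gamma_n(\bdy_\pm g) \to \xi$ and $\delta_m(\bdy_\pm h) \to \xi$. Given a subsegment $\gamma_g[a,b]$, apply $\gamma_n$ for large $n$: the translated subsegment $\gamma_n(\gamma_g[a,b])$ has endpoints tracking toward $\xi_\pm$, hence is Hausdorff close to a segment of any fixed quasigeodesic from $\xi_-$ to $\xi_+$; by the same reasoning for $\delta_m$ on the $h$ side, one concludes $\gamma_n(\gamma_g[a,b])$ is uniformly Hausdorff close rel endpoints to some $\delta_m^{-1}$-translate back to a subsegment of $\gamma_h$, with the constant $L$ depending only on $\delta$, $k$, $c$ and the initial fellow-traveling data — this is where one must be careful to extract a uniform $L$ working for all subsegments simultaneously.

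For $\pref{LemmaSimEquivEq} \implies \pref{LemmaSimWWPD}$ under the assumption that $g$ is WWPD: here $\Gamma\cdot\bdy_\pm g$ is already a discrete subset of $\bdy X \times \bdy X - \Delta$ by WWPD~\pref{ItemEqDiscYes}. Since $h \sim g$ (by the equivalence of \pref{LemmaSimEquivSim} and \pref{LemmaSimEquivEq} already proved) and $\bdy_\pm h \in \overline{\Gamma\cdot\bdy_\pm h} = \overline{\Gamma\cdot\bdy_\pm g}$, the pair $\bdy_\pm h$ lies in the closure of the discrete set $\Gamma\cdot\bdy_\pm g$, hence $\bdy_\pm h \in \Gamma\cdot\bdy_\pm g$; write $\bdy_\pm h = \gamma(\bdy_\pm g)$. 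Then $\gamma g \gamma^\inv$ and $h$ are loxodromics with the same ordered endpoint pair, so $\Stab(\bdy_\pm h) = \Stab(\bdy_\pm(\gamma g \gamma^\inv)) = \gamma \Stab(\bdy_\pm g)\gamma^\inv$, and the two orbits $\Gamma\cdot\bdy_\pm h = \Gamma\cdot\gamma(\bdy_\pm g) = \Gamma\cdot\bdy_\pm g$ coincide. The only subtlety is the step ``$\bdy_\pm h$ in the closure of a discrete set lies in the set,'' which is immediate once one notes a discrete subset of a Hausdorff space, when it equals its own closure, is closed; here $\overline{\Gamma\cdot\bdy_\pm g}$ being discrete forces it to equal $\Gamma\cdot\bdy_\pm g$.

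The main obstacle I expect is the implication $\pref{LemmaSimEquivInt} \implies \pref{LemmaSimEquivSim}$: turning convergence of endpoint pairs into the \emph{quantified, uniform} fellow-traveling-rel-endpoints condition of Definition~\ref{DefinitionSim} requires carefully choosing, for each given subsegment $\gamma_g[a,b]$, group elements that both translate it near the limit pair $\xi$ \emph{and} then translate it back into $\gamma_h$, while keeping the Hausdorff constant independent of $[a,b]$. The key technical input is stability of quasigeodesics with ideal endpoints under the extended boundary action, together with the fact that $\gamma_g$ and $\gamma_h$ are cocompact on their respective axes so that ``near $\xi$'' can be upgraded to ``in $\gamma_h$'' by a bounded correction.
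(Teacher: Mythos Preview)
Your plan is correct and matches the paper's proof almost exactly: the same cycle of implications, the same use of a fixed quasigeodesic at the limit pair for $\pref{LemmaSimEquivInt}\Rightarrow\pref{LemmaSimEquivSim}$, and the same discreteness argument for $\pref{LemmaSimEquivEq}\Rightarrow\pref{LemmaSimWWPD}$. The one imprecision in your sketch of $\pref{LemmaSimEquivInt}\Rightarrow\pref{LemmaSimEquivSim}$---that the endpoints of the \emph{finite} segment $\gamma_n(\gamma_g[a,b])$ ``track toward $\xi_\pm$''---is resolved exactly as you diagnose in your final paragraph: the paper precomposes $\gamma_n$ and $\delta_m$ with powers of $g$ and $h$ respectively to anchor $\gamma_n(\gamma_g(0))$ and $\delta_m(\gamma_h(0))$ near a fixed basepoint on the limit quasigeodesic, after which exhausting subsegments of the two translated quasi-axes fellow-travel with a uniform constant.
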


\begin{proof} Evidently \pref{LemmaSimEquivEq}$\implies$\pref{LemmaSimEquivInt}. Pick quasi-axes $\gamma_g$ of $g$ and $\gamma_h$ of $h$. 

To prove \pref{LemmaSimEquivSim}$\implies$\pref{LemmaSimEquivEq}, assuming $g \sim h$, there exists an $L$, and for each $n$ there exists $\alpha_n \in \Gamma$, such that $\gamma_g[-n,+n]$ is $L$-Hausdorff close rel endpoints to some subsegment of $\alpha_n(\gamma_h) = \gamma_{\alpha_n h \alpha_n^\inv}$. It follows that $\alpha_n(\bdy_\pm h) = \bdy_\pm(\alpha_n h \alpha_n^\inv) \to \bdy_\pm g$ as $n \to +\infinity$. This shows that $\bdy_\pm g \in \overline{\Gamma \cdot \bdy_\pm h}$, and so the inclusion $\overline{\Gamma \cdot \bdy_\pm g} \subset \overline{\Gamma \cdot \bdy_\pm h}$ holds. The opposite inclusion holds by symmetry.

To prove \pref{LemmaSimEquivInt}$\implies$\pref{LemmaSimEquivSim}, consider $(\xi,\eta) \in \overline{\Gamma \cdot \bdy_\pm g} \intersect \overline{\Gamma \cdot \bdy_\pm h}$. Choose sequences $\alpha_n,\beta_n$ in $\Gamma$ such that the sequences $\alpha_n (\bdy_\pm g) = \bdy_\pm(\alpha_n g \alpha_n^\inv)$ and $\beta_n (\bdy_\pm g) = \bdy_\pm(\beta_n h \beta_n^\inv)$ both approach $(\xi,\eta)$ in $\bdy X \times \bdy X - \Delta$. Choose an oriented quasigeodesic $\delta$ with initial end $\xi$ and terminal end $\eta$. By precomposing both $\alpha_n$ and $\beta_n$ with appropriate powers of $g$ we may assume that $\alpha_n(\gamma_g(0))$ and $\beta_n(\gamma_h(0))$ each stay within a uniform neighborhood of $\delta(0)$. After passing to subsequences of $\alpha_n$ and $\beta_n$ it follows there is a constant $L$ and sequences $s^-_n,t^-_n \to -\infinity$ and $s^+_n,t^+_n \to +\infinity$ such that $\alpha(\gamma_g[s^-_n,s^+_n])$ and $\beta(\gamma_h[t^-_n,t^+_n])$ are each $L/2$-Hausdorff close rel endpoints to the quasigeodesic segment $\delta[-n,+n]$, and so are $L$ Hausdorff close to each other. Since the paths $\gamma_g[s^-_n,s^+_n]$ exhaust the quasi-axis $\gamma_g$, as do the paths $\gamma_h[t^-_n,t^+_n]$, it follows that $g \sim h$.

Clearly \pref{LemmaSimWWPD}$\implies$\pref{LemmaSimEquivEq}. For the converse, assuming $g$ satisfies WWPD then by discreteness of $\Gamma \cdot \bdy_\pm g$ it follows that $\overline{\Gamma \cdot \bdy_\pm g} = \Gamma \cdot \bdy_\pm g$. Assuming \pref{LemmaSimEquivEq} we therefore have $\overline{\Gamma \cdot \bdy_\pm h} = \Gamma \cdot \bdy_\pm g$, and so $\Gamma \cdot \bdy_\pm h \subset \Gamma \cdot \bdy_\pm g$. By transitivity of the $\Gamma$-action on $\Gamma \cdot \bdy_\pm g$ it follows that $\Gamma \cdot \bdy_\pm g = \Gamma \cdot \bdy_\pm h$ which is~\pref{LemmaSimWWPD}.
\end{proof}

\textbf{Remark.} Note that if two loxodromics $g,h \in \Gamma$ satisfy $\bdy_- g = \bdy_- h$ then $g \sim h$ because $\lim_{i \to +\infty} g^i(\bdy_\pm h) = \bdy_\pm g$ and one may apply Lemma~\ref{LemmaSimEquivProps} \pref{LemmaSimEquivInt}$\implies$\pref{LemmaSimEquivSim}; similarly if $\bdy_+ g = \bdy_+ h$. On the other hand if $\bdy_- g = \bdy_+ h$, or even if $h = g^\inv$, one cannot make any general conclusions regarding whether or not $g \sim h$.

\bigskip

%
%
%
%

In \cite[Proposition 6 (5)]{\BeFujiTag} it is shown that if $\Gamma \act X$ is a nonelementary hyperbolic action that possesses a WPD element then $\Gamma$ contains a pair of inequivalent loxodromic elements $g \not\sim h$. What follows in Proposition~\ref{PropWWPDProps} is a WWPD generalization of \BeFuji\ Proposition 6, together with some details needed for later use. 

We first review some basic facts and well known facts about hyperbolic ping-pong \cite{Gromov:hyperbolic}. Recall that a hyperbolic action $F \act X$ is \emph{Schottky} if $F$ is free of finite rank and for some (any) $y \in X$ the orbit map $F \mapsto X$ given by $f \mapsto f \cdot y$ is a quasi-isometric embedding with respect to the word metric on~$F$; in particular each nontrivial element of $F$ is loxodromic. Recall also the following basic facts:
\begin{description}
\item[The Cayley tree of a Schottky group:] For any Schottky action $F \act X$, and for any free basis of $F$ with associated Cayley tree $T$, there exists an $F$-equivariant continuous quasi-isometric embedding $T \mapsto X$ which has a unique continuous extension to the Gromov bordifications $T \union \bdy T \mapsto X \union \bdy X$.
\item[Hyperbolic ping pong:] For any pair of independent loxodromic isometries $\alpha,\beta \in \Isom(X)$, and any pairwise disjoint neighborhoods $U_-$, $U_+$, $V_-$, $V_+ \subset \bdy X$ of $\bdy_-\alpha$, $\bdy_+\alpha$, $\bdy_-\beta$, $\bdy_+\beta$ respectively, there exists an integer $M \ge 1$ such that for any $m,n \ge M$ the following hold: $\alpha^m\bigl(\overline{\bdy X - U_-}\bigr) \subset U_+$; \, $\beta^n\bigl(\overline{\bdy X - V_-}\bigr) \subset V_+$; \, and $\alpha^m,\beta^n$ freely generate a Schottky action $F(\alpha^m,\beta^n) \act X$. 
\end{description}


The following proposition is a WWPD version of \cite[Proposition 6 (5)]{\BeFujiTag}.
\begin{proposition} 
\label{PropWWPDProps}
Let $\gamma \in \Gamma$ satisfy WWPD, and suppose that $\Gamma$ is nonelementary, equivalently $\Gamma \ne \Stab(\bdy \gamma)$ (Corollary~\ref{CorollaryActionClasses}). For any $\alpha \in \Gamma - \Stab(\bdy \gamma)$ there exists $A > 0$ such that for any $a \ge A$, letting $g_1=\gamma^a$ and $h_1 = \gamma^a (\alpha \gamma \alpha^\inv)^{-a} = \gamma^a \alpha \gamma^{-a} \alpha^\inv$, we have:
\begin{enumerate}
\item\label{ItemhOneLox}
$h_1$ is also loxodromic
\item\label{Item_gOne_hOneNotSim}
$g_1 \not\sim h_1$ and $g_1 \not\sim h^\inv_1$
\item \label{Item_gOne_hOneSch}
$g_1,h_1$ freely generate a Schottky subgroup of $\Gamma$.
\end{enumerate}
\end{proposition}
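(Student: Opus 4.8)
The plan is to set up everything via the hyperbolic ping-pong machinery, taking $\alpha$ in place of the abstract loxodromic $\beta$. First I would record the relevant endpoints: since $\gamma$ is WWPD and $\alpha \in \Gamma - \Stab(\bdy\gamma)$, Proposition~\ref{PropDefWWPD}~\pref{ItemWWPDNoStabInd} tells us that $\gamma$ and $\alpha\gamma\alpha^\inv$ are independent loxodromic elements, with $\bdy_\pm(\alpha\gamma\alpha^\inv) = \alpha(\bdy_\pm\gamma)$. Fix pairwise disjoint neighborhoods $U_-, U_+$ of $\bdy_-\gamma, \bdy_+\gamma$ and $V_-, V_+$ of $\alpha(\bdy_-\gamma), \alpha(\bdy_+\gamma)$. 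Hyperbolic ping pong then produces $A \ge 1$ so that for all $a \ge A$ the elements $\gamma^a$ and $(\alpha\gamma\alpha^\inv)^{a}$ freely generate a Schottky action, with the inclusion behavior $\gamma^a(\overline{\bdy X - U_-}) \subset U_+$ and $(\alpha\gamma\alpha^\inv)^{a}(\overline{\bdy X - V_-}) \subset V_+$. Set $g_1 = \gamma^a$ and $h_1 = \gamma^a(\alpha\gamma\alpha^\inv)^{-a}$.

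For \pref{ItemhOneLox} and \pref{Item_gOne_hOneSch} I would run a second, composite ping-pong argument directly on the pair $(g_1, h_1)$. The key is to identify the dynamics of $h_1 = \gamma^a (\alpha\gamma\alpha^\inv)^{-a}$: it first applies $(\alpha\gamma\alpha^\inv)^{-a}$, pushing $\overline{\bdy X - V_+}$ into $V_-$, and then applies $\gamma^a$, pushing $\overline{\bdy X - U_-}$ into $U_+$. So $h_1$ maps the complement of a small neighborhood of $V_+$ into $U_+$, and symmetrically $h_1^\inv = (\alpha\gamma\alpha^\inv)^{a}\gamma^{-a}$ maps the complement of a small neighborhood of $U_-$ into $V_-$. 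This exhibits $h_1$ as loxodromic with $\bdy_+ h_1$ near $\bdy_+\gamma$ (inside $U_+$) and $\bdy_- h_1$ near $\alpha(\bdy_+\gamma)$ (inside $V_-$); shrinking $U_\pm, V_\pm$ if necessary at the outset, these four regions $\overline{U_-}, \overline{U_+ \cup (\text{a nbhd of } \bdy_+ h_1)}, \ldots$ can be arranged disjoint, and a standard ping-pong table argument on the four regions shows $g_1, h_1$ freely generate a Schottky subgroup. (One must be a little careful that $\bdy_\pm g_1 = \bdy_\pm \gamma$ while $\bdy_+ h_1 \in U_+$ and $\bdy_- h_1 \in V_-$ are merely \emph{near} but distinct from $\bdy_\pm\gamma$ respectively; choosing the neighborhoods nested appropriately handles this.)

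For \pref{Item_gOne_hOneNotSim}, the point is that $g_1 = \gamma^a$ is WWPD by Proposition~\ref{PropDefWWPD} (powers of WWPD elements are WWPD, since they share endpoint pairs and hence orbit-of-endpoint-pairs), so by Lemma~\ref{LemmaSimEquivProps} the relation $g_1 \sim h_1$ would force $\Gamma \cdot \bdy_\pm g_1 = \Gamma \cdot \bdy_\pm h_1$, i.e. $\bdy_\pm h_1 \in \Gamma \cdot \bdy_\pm\gamma$. But $g_1, h_1$ are independent loxodromics in a Schottky group, so $\bdy h_1 \cap \bdy g_1 = \emptyset$; if $\bdy_\pm h_1 = k(\bdy_\pm\gamma)$ for some $k \in \Gamma$ then since $g_1, h_1$ are both loxodromic and WWPD (the latter using Corollary/Proposition~\ref{PropDefWWPD}~\pref{ItemWWPDNiceEnds}: $\bdy\gamma$ is equal to or disjoint from $\bdy h_1$), we would get $\bdy h_1 = \bdy g_1$ or disjoint — and equality is excluded, so $\bdy_\pm h_1 \notin \Gamma\cdot\bdy_\pm\gamma$ unless... here I need the full strength of \pref{ItemWWPDNiceEnds}, which says for \emph{every} loxodromic $\delta$, $\bdy\delta$ is either equal to or disjoint from $\bdy\gamma$; applying this with $\delta = k^\inv h_1 k$ gives a contradiction with independence. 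The same argument applied to $h_1^\inv$ (which has $\bdy h_1^\inv = \bdy h_1$, still disjoint from $\bdy g_1$) gives $g_1 \not\sim h_1^\inv$.

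\textbf{Main obstacle.} The delicate step is the bookkeeping in the composite ping-pong argument for \pref{ItemhOneLox} and \pref{Item_gOne_hOneSch}: one must choose the six neighborhoods $U_\pm, V_\pm$ (and the ping-pong table regions for the pair $g_1, h_1$) with the right nesting and disjointness so that a single constant $A$ works and so that the attracting/repelling regions for $g_1 = \gamma^a$ (which are literally near $\bdy_\pm\gamma$) are kept disjoint from those for $h_1$ (which straddle $U_+$ and $V_-$). This is routine hyperbolic ping-pong but requires care in the order of quantifiers — choose neighborhoods first, then $A$ — and in verifying the composite map $h_1$ has the claimed source--sink behavior before one knows a priori that it is loxodromic.
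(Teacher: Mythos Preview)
Your argument for item~\pref{Item_gOne_hOneNotSim} has a genuine gap. You correctly reduce $g_1 \sim h_1$ (via Lemma~\ref{LemmaSimEquivProps}~\pref{LemmaSimWWPD}) to the existence of $k \in \Gamma$ with $\bdy_\pm h_1 = k(\bdy_\pm\gamma)$. But then applying Proposition~\ref{PropDefWWPD}~\pref{ItemWWPDNiceEnds} to $\delta = k^{-1} h_1 k$ gives $\bdy\delta = k^{-1}(\bdy h_1) = \bdy\gamma$, which is the \emph{equal} alternative in~\pref{ItemWWPDNiceEnds}, not a contradiction. Independence of $h_1$ and $g_1$ says nothing about $k^{-1} h_1 k$ and $\gamma$ for an arbitrary~$k$. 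In fact there is no purely ``endpoint-set'' obstruction to $\bdy_\pm h_1$ lying in the orbit $\Gamma\cdot\bdy_\pm\gamma$; you have not yet used the discreteness of that orbit, and that is exactly what is needed. The paper's fix is to invoke WWPD~\pref{ItemEqDiscYes} at the outset when choosing the ping-pong neighborhoods: take $U_-,U_+$ so small that $U_- \times U_+$ meets the orbit $\Gamma\cdot\bdy_\pm\gamma$ only in $\bdy_\pm\gamma$ itself. One then checks that $\gamma^{-M}(\bdy_\pm h_1)$ lands in $U_- \times U_+$ (this uses $\bdy_- h_1 \in V_+$ and $\bdy_+ h_1 \in \gamma^M(U_+)$) but is distinct from $\bdy_\pm\gamma$ (since $\gamma^{-M} h_1 \gamma^M$ is independent of~$\gamma$), hence is not in the orbit. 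That gives $g_1 \not\sim h_1$; the $h_1^{-1}$ case is symmetric.

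Two smaller points. First, for items~\pref{ItemhOneLox} and~\pref{Item_gOne_hOneSch} your composite ping-pong can be made to work, but it is unnecessary: since $g_1 = \gamma^a$ and $h_1 = \gamma^a (\alpha\gamma\alpha^{-1})^{-a}$ together generate the same group as $\gamma^a$ and $(\alpha\gamma\alpha^{-1})^a$, one simply observes that $\langle g_1,h_1\rangle = \langle \gamma^a,(\alpha\gamma\alpha^{-1})^a\rangle$ is Schottky directly from the original ping-pong, and $h_1$ is loxodromic as a nontrivial element of a Schottky group. Second, a minor slip: $\bdy_- h_1$ lies in $V_+$ (a neighborhood of $\alpha(\bdy_+\gamma)$), not in $V_-$; your verbal description ``near $\alpha(\bdy_+\gamma)$'' is right but the parenthetical ``inside $V_-$'' is not.
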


\begin{proof} Consider the loxodromic $\delta = \alpha \gamma \alpha^\inv$. Applying Proposition~\ref{PropDefWWPD}~\pref{ItemWWPDNoStabInd}, $\delta$ and $\gamma$ are independent. Now we play hyperbolic ping-pong with some WWPD spin. Applying WWPD~\pref{ItemEqDiscYes}, there exist pairwise disjoint neighborhoods $U_-,U_+,V_-,V_+ \subset \bdy S \times \bdy S - \Delta$ of $\bdy_- \gamma$, $\bdy_+ \gamma$, $\bdy_- \delta$, $\bdy_+ \delta$, respectively, where $U_-,U_+$ are chosen so small that $\bdy_\pm \gamma$ is the only element of the $\Gamma$-orbit of the ordered pair $\bdy_\pm \gamma$ that is contained in $U_- \times U_+$. Choose $M \ge 1$ sufficiently large so that for $m, n \ge M$ the conclusions of hyperbolic ping pong hold as stated above, in particular $\gamma^m,\delta^n$ freely generate a Schottky action on~$S$. For $a \ge A = 2M$, letting $g_1 = \gamma^a$ and $h_1=\gamma^{a}\delta^{-a} = \gamma^{a} \alpha \gamma^{-a} \alpha^\inv$, it follows that $h_1$ is loxodromic and that the two elements $g_1,h_1$ freely generate a Schottky subgroup, because any finitely generated subgroup of the Schottky group $\<\gamma^a,\delta^a\>$ is also Schottky. 

Having verified conclusions~\pref{ItemhOneLox},~\pref{Item_gOne_hOneSch} of Proposition~\ref{PropWWPDProps}, we turn to~\pref{Item_gOne_hOneNotSim}.

Letting $U'_+ = \gamma^M(U_+) \subset U_+$,
we have $\bdy_- h_1 \in V_+$, \, $\bdy_+ h_1 \in U'_+$, and $\gamma^{-M}(\bdy_\pm h_1) = \bdy_\pm(\gamma^{-M} h_1 \gamma^{M}) \in U_- \times U_+$. But $\gamma^{-M} h_1 \gamma^{M}$ is independent of $\gamma$ and so $\gamma^{-M}(\bdy_\pm h_1) \ne \bdy_\pm \gamma$. From our choice of $U_- \times U_+$ and the fact that $\gamma$ satisfies WWPD, by applying Lemma~\ref{LemmaSimEquivProps}~\pref{LemmaSimWWPD} it follows that $\gamma \not\sim \gamma^{-M} h_1 \gamma^{M}$. Since $\not\sim$ is invariant under conjugacy and under passage to powers, it follows that $g_1 \not\sim h_1$. 

The exact same argument --- with $\alpha^\inv$ in place of $\alpha$, and with the inclusion $\gamma^{-M}(U_-) \subset U_-$ in place of the inclusion $\gamma^M(U_+) \subset U_+$ --- shows that $g_1 \not\sim h'_1 = \gamma^a \alpha^\inv \gamma^{-a} \alpha$. But since $h'_1$ is conjugate to $h^\inv_1$ it follows that $g_1 \not\sim h^\inv_1$. 
\end{proof}

\paragraph{Remark.} 
Putting together Proposition~\ref{PropWWPDProps} and \BeFuji\ Theorem 1 we obtain, as an immediate corollary, the following theorem regarding group actions on hyperbolic spaces possessing WWPD elements. 

\begin{theorem}
\label{ThmWWPDHTwoBSpecial}
If $\Gamma \act S$ is a hyperbolic action possessing an independent pair of loxodromic elements, and if $\Gamma$ has a WWPD element, then $H^2_b(\Gamma;\reals)$ contains an embedded~$\ell^1$. \qed
\end{theorem}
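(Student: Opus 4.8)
The plan is to reduce Theorem~\ref{ThmWWPDHTwoBSpecial} to \cite[Theorem~1]{\BeFujiTag}, which states that if a group $\Gamma$ acting on a hyperbolic space contains two loxodromic elements $g_1, h_1$ with $g_1 \not\sim h_1$ and $g_1 \not\sim h_1^\inv$, then $\wt{QH}(\Gamma;\reals)$ is infinite-dimensional and in fact contains an embedded $\ell^1$; by the snake-lemma exact sequence recalled in Section~\ref{SectionBasicConcepts}, the same conclusion then passes to $H^2_b(\Gamma;\reals)$. So the entire task is to produce such a pair $g_1, h_1$ inside $\Gamma$.

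\textbf{Main steps.} First, by hypothesis $\Gamma$ possesses an independent pair of loxodromic elements, so the action $\Gamma \act S$ is nonelementary; also by hypothesis $\Gamma$ has a WWPD element $\gamma$. By Corollary~\ref{CorollaryActionClasses}, nonelementarity is equivalent to $\Gamma \ne \Stab(\bdy\gamma)$, so we may choose some $\alpha \in \Gamma - \Stab(\bdy\gamma)$. Now apply Proposition~\ref{PropWWPDProps} to this $\gamma$ and $\alpha$: it produces, for all sufficiently large $a$, elements $g_1 = \gamma^a$ and $h_1 = \gamma^a \alpha \gamma^{-a}\alpha^\inv$ that are both loxodromic and satisfy $g_1 \not\sim h_1$ and $g_1 \not\sim h_1^\inv$. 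This is exactly the hypothesis of \cite[Theorem~1]{\BeFujiTag}. Feeding $g_1, h_1$ into that theorem yields an embedding of $\ell^1$ into $\wt{QH}(\Gamma;\reals)$, and composing with the injection $\wt{QH}(\Gamma;\reals) \to H^2_b(\Gamma;\reals)$ from the exact sequence in Section~\ref{SectionBasicConcepts} gives an embedding of $\ell^1$ into $H^2_b(\Gamma;\reals)$, as desired.

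\textbf{Where the work lies.} There is essentially no obstacle remaining at this stage: all the substantive content has already been packaged into Proposition~\ref{PropWWPDProps} (which itself rests on Proposition~\ref{PropDefWWPD}~\pref{ItemWWPDNoStabInd}, Lemma~\ref{LemmaSimEquivProps}~\pref{LemmaSimWWPD}, and hyperbolic ping-pong) and into the cited result \cite[Theorem~1]{\BeFujiTag}. The only point to double-check is the logical translation of hypotheses: that ``independent pair of loxodromic elements'' is exactly ``nonelementary'' in the sense used by Corollary~\ref{CorollaryActionClasses} and Proposition~\ref{PropWWPDProps}, and that a WWPD element in the sense of Proposition~\ref{PropDefWWPD} is what Proposition~\ref{PropWWPDProps} takes as input --- both of which are immediate. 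Hence the proof is a short assembly of already-established pieces, which is why the statement is marked \qed in the excerpt.
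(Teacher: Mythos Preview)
Your proposal is correct and follows precisely the paper's own approach: the paper states (in the Remark just before Theorem~\ref{ThmWWPDHTwoBSpecial}) that the result is obtained ``as an immediate corollary'' by putting together Proposition~\ref{PropWWPDProps} and \cite[Theorem~1]{\BeFujiTag}, and that is exactly what you do, with the intermediate logical steps (nonelementarity $\Leftrightarrow$ $\Gamma\ne\Stab(\bdy\gamma)$, and the passage from $\wt{QH}$ to $H^2_b$) spelled out carefully.
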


\section{The Global WWPD Theorem} 
\label{SectionWreathProductMethods}

In \BeFuji\ Theorem 8, Bestvina and Fujiwara generalize their Theorem~7 by giving hypotheses on a hyperbolic action of a finite index subgroup $N$ of a group $\Gamma$ which are sufficient to imply the existence of an embedding of $\ell^1$ into $H^2_b(\Gamma;\reals)$. The hypotheses require that the subgroup action $N \act X$ satisfies a certain global WPD property, and that there is an embedding of $\Gamma$ into a certain wreath product which interacts with the subgroup action in a certain way. Those hypotheses are tailored to the case where $\Gamma$ is a subgroup of a surface mapping class groups, allowing one to apply Ivanov's subgroup decomposition theory to obtain a natural finite index subgroup of $\Gamma$ acting on a certain subsurface curve complex, \emph{and} allowing one to use the theorem of \BeFuji\ saying that \emph{every} element of the mapping class group that acts loxodromically on the curve complex satisfies WPD.

The Global WWPD Theorem generalizes \BeFuji\ Theorem~8 in two ways: the global WPD hypothesis of that theorem is replaced by our much weaker global WWPD hypothesis; and one does not need any wreath product hypothesis at all, instead the proof simply applies the Kaloujnin-Krasner embedding of $\Gamma$ into the wreath product of $N$ by $\Gamma/N$.

In our paper \cite{HandelMosher:BddCohomologyI}, where the Global WWPD Theorem is applied in the setting of $\Out(F_n)$, the loxodromic elements of the action we use are \emph{not} known to all satisfy WWPD, but we \emph{do} construct sufficient WWPD elements in order to verify that the Global WWPD Hypothesis holds. 

\begin{definition} 
\label{DefWWPDHypothesis} A group $\Gamma$ is said to satisfy the \emph{global WWPD hypothesis} if there exists a normal subgroup $N \normal \Gamma$ of finite index, a hyperbolic action $N \act X$, and a rank~$2$ free subgroup $F \subgroup N$, such that the following hold:
\begin{enumerate}
\item\label{ItemLoxOrEll}
Each element of $N$ acts either loxodromically or elliptically on $X$.
\item\label{ItemFirstWWPD}
The restricted action $F \act X$ is Schottky and each of its nonidentity elements is WWPD with respect to the action $N \act X$.
\item\label{ItemWWPDOrEllRestrict} For each inner automorphism $i_g \from \Gamma \to \Gamma$, letting $N \act_{g} X$ denote the composed action $N \xrightarrow{i_g} N \act X$, the restricted action $F \act_{g} X$  satisfies one of the following:
\begin{enumerate}
\item\label{ItemSchottkyRestriction} $F \act_{g} X$ is Schottky and each of its nonidentity elements is WWPD with respect to the action $N \act_{g} X$; or 
\item\label{ItemEllipticRestriction}
$F \act_{g} X$ is elliptic.
\end{enumerate}
\end{enumerate}
In situations where $N$, its action $N \act X$, and/or $F$ are specified, we shall adopt phrases like ``$\Gamma$ satisfies WWPD with respect to $N$, its action $N \act X$, and/or $F$''. 

In practice we reformulate item~\pref{ItemWWPDOrEllRestrict}, using that $N$ has finite index in order to cut down the set of inner automorphisms that must be checked to a finite subset. Since $N$ is normal in~$\Gamma$, the action of $\Gamma$ on itself by inner automorphisms restricts to an action of $\Gamma$ on $N$ that we denote $i \from \Gamma \to \Aut(N)$. Choose coset representatives $g_\kappa$ of $N$ in $\Gamma$, where $\kappa \in \{1,\ldots,K\}$ and $K = [\Gamma:N]$, and by convention choose $g_1$ to be the identity. Let $i_\kappa \in \Aut(N)$ be the restriction to $N$ of $i_{g_\kappa} \in \Inn(\Gamma)$. We refer to $i_1,\ldots,i_K$ as \emph{outer representatives} of the action of $\Gamma$ on $N$ (see diagram below). Let $N \act_\kappa X$ be the composed action $N \xrightarrow{i_\kappa} N \act X$, and let $F \act_\kappa X$ be its restriction. With this notation, to verify \pref{ItemWWPDOrEllRestrict} as stated for all the actions $F \act_{g} X$ it suffices to check only the actions $F \act_\kappa X$. In other words, \pref{ItemWWPDOrEllRestrict} is equivalent to:
\begin{itemize}
\item[$\pref{ItemWWPDOrEllRestrict}'$] For each $\kappa=1,\ldots,K$ the action $F \act_\kappa X$ satisfies one of the following:
\begin{itemize}
\item[(a)] $F \act_\kappa X$ is Schottky and each of its nonidentity elements is WWPD with respect to the action $N \act_\kappa X$; or
\item[(b)] $F \act_\kappa X$ is elliptic.
\end{itemize}
\end{itemize}
The terminology of ``outer representatives'' refers to the fact that in the commutative diagram
$$\xymatrix{
\Gamma \ar[rr]^{i} \ar[d] && \Aut(N) \ar[d] \\
\Gamma / N \ar[rr] && \Out(N) \ar@{=}[r] &\Aut(N) / \Inn(N)
}$$
the automorphisms $i_\kappa \in \Aut(N)$ represent all of the elements of the image of the homomorphism $\Gamma / N \to \Out(N)$ (that homomorphism need not be injective, and so there may be some duplication of elements of $\Out(N)$ represented by the list $i_1,\ldots,i_K$, but this is inconsequential). The equivalence of \pref{ItemWWPDOrEllRestrict} and $\pref{ItemWWPDOrEllRestrict}'$ holds because if we replace $g_\kappa$ by something else $h = \nu g_\kappa$ in its coset ($\nu \in N$), then $i_\kappa \in \Aut(N)$ is replaced by $i_h = i_\nu \composed i_\kappa \in \Aut(N)$, and so the restricted actions $F \act_\kappa X$ and $F \act_{h} X$ are conjugate by an isometry of $X$, namely the action of $\nu \in N$. But each of properties~(a) and~(b) is invariant under such conjugation. 

This completes Definition~\ref{DefWWPDHypothesis}.
\end{definition}

\emph{Remark.} We emphasize that although the various actions $N \act_\kappa X$ are equivalent up to inner automorphisms of $\Gamma$ restricted to its normal subgroup $N$, that does \emph{not} mean that they are equivalent up to conjugation by isometries of $X$, because $\Gamma$ itself does not act on~$X$. Thus, for example, an element of $N$ may be loxodromic with respect to one of the actions $N \act_\kappa X$ but not with respect to a different one.

%
%
%

Here, restated from the introduction, is our main result regarding the global WWPD hypothesis.

\begin{theoremD}[Global WWPD Theorem]
\label{ThmWWPDHTwoB}
If a group $\Gamma$ satisfies the global WWPD hypothesis then $H^2_b(\Gamma;\reals)$ contains an embedding of $\ell^1$.
\end{theoremD}

The proof of this theorem, which takes up the remainder of the paper, will hew closely to the proof of \cite{BestvinaFujiwara:bounded} Theorem~8, except for the following major difference: the proof uses the Kaloujnin--Krasner embedding to avoid the wreath product hypothesis in the statement of \cite{BestvinaFujiwara:bounded} Theorem~8. This results in many slight technical differences from \BeFuji, and so we have written the proof to be primarily self-contained, but with a few references to \cite{BestvinaFujiwara:bounded} and to~\cite{Fujiwara:H2BHyp}. We begin by setting up the notation of the Kaloujnin--Krasner embedding which is adopted throughout the proof; substantial use of algebraic properties of the embedding comes in Step~5 of the proof. 

\medskip

Fix a group $\Gamma$, a finite index normal subgroup $N \normal \Gamma$, an action $N \act X$ on a hyperbolic complex $X$, and a rank~$2$ free subgroup $F \subgroup N$, with respect to which the global WWPD hypothesis of Definition~\ref{DefWWPDHypothesis} holds: each element of $N$ is either loxodromic or elliptic; 
the restricted action $F \act X$ is a Schottky group whose nontrivial elements are all WWPD elements of the action $N \act X$; and for each inner automorphism $i \from \Gamma \to \Gamma$, the restriction to $F$ of the composed action $N \xrightarrow{i} N \act X$ is either elliptic or is a Schottky group whose nontrivial elements are all WWPD elements of the composed action. We shall prove that $H^2_b(\Gamma;\reals)$ contains an embedded $\ell^1$.

We may assume that $F \subgroup [N,N]$, because there is a natural inclusion of commutator subgroups $[F,F] \subgroup [N,N]$, and because the global WWPD hypothesis is retained when $F$ is replaced by any of its rank~$2$ subgroups, so we may replace $F$ with a rank~$2$ subgroup of $[F,F]$. This assumption will be used only at the very last sentence of the proof.

\medskip

\begin{proof}[Step 1:] \emph{The Kaloujnin--Krasner embedding.} This is an embedding of $\Gamma$ into a natural wreath product constructed from~$N \normal \Gamma$. Consider the quotient group $Q=\Gamma/N = \{B = Ng \suchthat g \in \Gamma\}$. Elements of the wreath product $N^Q \semidirect \Sym(Q)$ are denoted as ordered pairs $(\rho,\phi) \in N^Q \times \Sym(Q)$, the group operation being $(\rho,\phi) \cdot (\sigma,\psi) = (\rho \cdot (\sigma \composed \phi),\psi\phi)$; note the reversal of order in the second coordinate. The embedding $\theta \from \Gamma \inject N^Q \semidirect \Sym(Q)$ is given by the following formula.\footnote{See e.g.\ \cite{Wells:WreathProduct}, proofs of Proposition~12.1 and Theorem~11.1} For each coset $B \in \Gamma/N=Q$ choose a representative $g_B \in B$, and so $B = N g_B$; in particular, choose $g_N$ to be the identity. For each $\mu \in \Gamma$ define $\theta(\mu) = (\rho_\mu,\phi_\mu) \in N^Q \semidirect \Sym(Q)$ where for each $B \in Q$ we have:
\begin{align*}
\rho_\mu(B) &= g^\vp_B \, \mu \, g_{B\mu}^\inv \in N \\
\phi_\mu(B) &= B\mu \in Q
\end{align*}
One may easily check this to be a homomorphism. To see that it is injective, note first that $\phi_\mu$ is trivial if and only if $B=B\mu$ for all $B \in \Gamma/N$ if and only if $N\nu=N\nu\mu$ for all $\nu \in \Gamma$ if and only if $\mu \in N$ (we shall need this below). Next, if $\mu \in N$ and if $\rho_\mu(B)$ is trivial for all $B \in Q$, then using $B=N$ it follows that $\rho_\mu(N) = g^{\vphantom\inv}_N \, \mu \, g_{N\mu}^\inv = g^{\vphantom\inv}_N \, \mu \, g_N^\inv = \mu$ is trivial.

This completes Step 1.
\end{proof}

We next set up some notation that will be used for the rest of the proof of Theorem~\ref{ThmWWPDHTwoB}. In particular we match the notation regarding the Kaloujnin--Krasner embedding with our notation for outer representatives of the action of $\Gamma$ on~$N$.

Under the natural injection $N^Q \inject N^Q \semidirect \Sym(Q)$ given by $\rho\mapsto (\rho,\text{Id})$, we identify $N^Q$ with its image. Under this identification, note that $\theta(\Gamma) \intersect N^Q = \theta(N)$ because, as just seen, $\phi_\mu$ is trivial if and only if~$\mu \in N$.

Fix an enumeration of $Q = \Gamma / N$ as $N=B_1,\ldots,B_K$, and abbreviate $g_{B_\kappa}$ to~$g_\kappa$. For each $\kappa=1,\ldots,K$ consider the map $i_\kappa \from N \to N$ defined by the following composition:
$$
\xymatrix{
N \ar[r]_-{\theta} \ar@/^2pc/[rrrrr]^{i_\kappa} & \theta(N) \ar@{=}[r] & \theta(\Gamma) \intersect N^Q \ar[r]_-{\subset} & N^Q \ar[rr]_{\rho \mapsto \rho(B_\kappa)} && N
}
$$
Tracing through the definitions, and using that if $\mu \in N$ then $B\mu=B$, it follows that $i_\kappa \from N \to N$ is the outer representative of the action of $\Gamma$ on $N$ determined by $g_\kappa$: 
$$i_\kappa(\mu) = \rho_\mu(B_\kappa) = g^{\vphantom{-1}}_\kappa \, \mu \, g_{B_\kappa\mu}^\inv = g^{\vphantom{-1}}_\kappa \, \mu \, g_\kappa^\inv
$$
In particular, since $N=B_1$ is represented by $g_1=\Id$ the automorphism $i_1$ is the identity and the action $N \act_1 X$ is the given action $N \act X$; we therefore often drop the subscript $1$ from the action $N \act_1 X$.

Since each action $F \act_\kappa X$ is either Schottky or elliptic, and since $F \act_1 X$ is Schottky, after re-indexing there exists $K_1 \ge 1$ such that the action $F \act_\kappa X$ is Schottky when $1 \le \kappa \le K_1$ and it is elliptic when $K_1 < \kappa \le K$.

\medskip

Consider the equivalence relation $\sim$ on elements of the action $N \act X$, as defined in Section~\ref{SectionSim}. We let $\sim_\kappa$ denote the analogous equivalence relation on the elements of each action $N \act_\kappa X$, which is given by 
$$\mu \sim_\kappa \nu \iff i_{\kappa}(\mu) \sim i_{\kappa}(\nu)
$$

\begin{proof}[Step 2:] \emph{A good free basis for $F$.} 
We may assume, after replacing $F$ with a rank~$2$ free subgroup, that there is a free basis $F = \<g_1,h_1\>$ such that for each $\kappa=1,\ldots,K$ we have
\begin{enumerate}
\item\label{ItemBeFujiClaimOne} $g_1 \not\sim_\kappa h_1$. 
\item\label{ItemBeFujiClaimTwo} $g_1 \not\sim_\kappa h_1^\inv$. 
\end{enumerate}
To see why, start with an arbitrary free basis $F = \<g_1,h_1\>$, and suppose by induction that we have $k \in \{1,\ldots,K\}$ such that \pref{ItemBeFujiClaimOne} and~\pref{ItemBeFujiClaimTwo} hold for $1 \le \kappa < k$ (which is vacuously true when $k=1$). If $k > K_1$ then $F \act_k X$ is elliptic and so \pref{ItemBeFujiClaimOne} and~\pref{ItemBeFujiClaimTwo} hold as well for $\kappa=k$. We may therefore assume that $k \le K_1$. The subgroup $\<g_1\>$ is a maximal cyclic subgroup of $F$, since $g_1$ is a free basis element. It follows that for each of the actions $F \act_\kappa X$, $1 \le \kappa \le k$, the subgroup of $F$ that stabilizes the fixed point pair of the loxodromic isometry $g_1 \act_\kappa X$ is equal to $\<g_1\>$, and therefore $h_1$ does not stabilize the fixed point pair of $g_1$ for any of these actions. Applying Proposition~\ref{PropWWPDProps} we obtain for each $1 \le \kappa \le k$ a constant $A_\kappa$ such that if $a \ge A_\kappa$ then $g_1$ and~$h'_1 = g_1^a h_1 g_1^{-a} h_1^\inv$ satisfy properties \pref{ItemBeFujiClaimOne} and~\pref{ItemBeFujiClaimTwo}. Taking $a \ge \max\{A_\kappa \suchthat 1 \le \kappa \le K_1\}$ and replacing $h_1$ with $h'_1$, those properties hold simultaneously for all $\kappa \le K_1$. Having made this replacement, the WWPD hypothesis for $\Gamma$ with respect to $N \act X$ and $F$ remains true, completing Step~2.
\end{proof}

We shall often use the above ``maximization trick'' to obtain constant bounds independent of $\kappa$.

\subparagraph{Some notations:} Let $T$ denote the Cayley tree of the free group $F = \<g_1,h_1\>$, with a geodesic metric that assigns length~$1$ to each edge. For each $\kappa \le K_1$ we have a continuous, $K_\kappa,C_\kappa$ quasi-isometric embedding $\xi_\kappa \from T \to X$ which takes vertices to vertices and edges to edge paths and which is $F$-equivariant with respect to the action $N \act_\kappa X$. The quasi-isometry constants $K_\kappa \ge 1$, $C_\kappa \ge 0$ of the maps $\xi_\kappa$ at first depend on $\kappa$, but by taking maxima we obtain:
\begin{enumeratecontinue}
\item\label{ItemQIConstants} There exists $K \ge 1$, $C \ge 0$ independent of $\kappa$ such that $\xi_\kappa \from T \to X$ is a $K,C$ quasi-isometric embedding for $\kappa \le K_1$. 
\end{enumeratecontinue}

Fix $\delta$ to be a hyperbolicity constant for $X$. Given a nontrivial $g \in F$, let $A_T(g)$ denote its axis in the tree~$T$, along which the translation distance of $g$ denoted $L_g$ equals the cyclically reduced word length of~$g$. For each $\kappa \le K_1$, let $A_\kappa(g) = \xi_\kappa(A_T(g))$ which by item~\pref{ItemQIConstants} is a $K,C$-quasi-axis of $g$ with respect to the Schottky action \hbox{$F \act_\kappa X$}. Again we often drop the subscript $\kappa=1$, and so $A(g)=A_1(g)$ is the $K,C$-quasi-axis with respect to the given action $N \act X = N \act_1 X$. 

\begin{proof}[Step 3: A sequence of inequivalencies.] 
There exists a sequence of nontrivial elements
$$f_1,f_2,f_3,\ldots \in F \subgroup [N,N]
$$
such that the following inequivalence properties hold:
\begin{enumeratecontinue}
\item\label{ItemNotSimThree}
For each $i \ge 1$ and each $1 \le \kappa \le K$ we have $f_i \not\sim i_\kappa(f_i^\inv)$.
\item\label{ItemNotSimOne}
For each $i > j \ge 1$ and each $1 \le \kappa \le K$ we have $f_i \not\sim i_\kappa(f^\pm_j)$.
\end{enumeratecontinue}
If $\kappa > K_1$ then $i_\kappa(f_i^\inv)$ and $i_\kappa(f_j^{\pm 1})$ are both elliptic and are therefore distinct from the loxodromic element $f_i$, hence \pref{ItemNotSimThree} and \pref{ItemNotSimOne} both follow from Definition~\ref{DefinitionSim}. We may therefore assume that $\kappa \le K_1$. In that case the justifications we give will follow \BeFuji\ Proposition~2, Claim 3 together with lines of argument found in \BeFuji\ Theorem~8, on pages 81--82 (here we hew more closely to the notation of \BeFuji).

The elements $f_j$ are given by formulas
$$f_j = g_1^{-s_j} h_1^{-t_j} g_1^{m_j} h_1^{n_j} g_1^{k_j} h_1^{-l_j}
$$
with a rapidly growing sequence of integer exponents 
\begin{align*}
& m_1 \lessless n_1 \lessless k_1 \lessless l_1 \lessless s_1 \lessless t_1 \lessless \\
& m_2 \lessless n_2 \lessless k_2 \lessless l_2 \lessless s_2 \lessless t_2 \lessless \\
& m_3 \lessless n_3 \lessless \cdots
\end{align*}
which are chosen by the following protocol. There exists a sequence of functions 
\begin{align*}
M_1, \quad N_1, \quad & K_1, \quad L_1, \quad S_1, \quad T_1, \\
M_2, \quad N_2, \quad & K_2, \quad L_2, \quad S_2, \quad T_2, \\
M_3, \quad N_3, \quad &\cdots
\end{align*}
with the following properties:
\begin{itemize}
\item The $1^{\text{st}}$ function $M_1$ has $0$ arguments (and so is a constant), the $2^{\text{nd}}$ function $N_1$ has 1 argument, the $3^{\text{rd}}$ has 2 arguments, and so on, the $p^{\text{th}}$ having $p-1$ arguments.
\item If $m_1 \ge M_1$, $n_1 \ge N_1(m_1)$, $k_1 \ge K_1(m_1,n_1)$, $l_1 \ge L_1(m_1,n_1,k_1)$, and so on, the $p^{\text{th}}$ exponent being bounded below by the $p^{\text{th}}$ function applied to the previous $p-1$ exponents, then properties~\pref{ItemNotSimThree}, \pref{ItemNotSimOne} hold. 
\end{itemize}
At first these functions are chosen to depend on $\kappa \le K_1$, obtaining a sequence of functions denoted $M^\kappa_1,N^\kappa_1,\ldots,M^\kappa_2,N^\kappa_2,\ldots$ But then, by the maximization trick, one replaces the collection of constants $M^\kappa_1$ with the single constant $M_1=\max_{\kappa \le K_1}\{M^\kappa_1\}$; and one replaces the collection of 1-variable functions $N^\kappa_1(m_1)$ with the single 1-variable function $N_1(m_1)=\max_{\kappa \le K_1} \{N^\kappa_1(m_1)\}$; and so~on. 



First we prove~\pref{ItemNotSimOne}. The quasi-isometric embedding $\xi_1 \from T \to X$ restricts to a map $A_T(f_i) \xrightarrow{\xi_1} A_1(f_i)$ taking an $h_1^{-l_i}$ subsegment of $A_T(f_i)$ to a subsegment $\alpha$ of $A_1(f_i)$ (see the diagram below). The constant $l_i$ is chosen so large that if $f_i \sim i_\kappa(f_j)$ then some translate of $\alpha$ stays close rel endpoints to a subsegment of $A_\kappa(f_j)$ which contains the image under $T \xrightarrow{\xi_\kappa} X$ of an entire fundamental domain for $A_T(f_j)$ labelled $g_1^{-s_j} h_1^{-t_j} g_1^{m_j} h_1^{n_j} g_1^{k_j} h_1^{-l_j}$. Under that translate, the $g_1^{m_j}$ subsegment of this fundamental domain is taken by $\xi_\kappa$ to a subsegment $\beta$ of $A_\kappa(f_j)$, and the $h_1^{n_j}$ subsegment is taken by $\xi_\kappa$ to a subsegment $\gamma$ of $A_\kappa(f_j)$, as shown in the the following diagram. In this diagram the symbol $\mapsto_\kappa$ is an abbreviation for the map $\xi_\kappa$:

\vbox{
\centerline{$\displaystyle\cdots g_1^{-s_i} h_1^{-t_i} g_1^{m_i} h_1^{n_i} g_1^{k_i} \overbrace{h_1^\inv h_1^\inv h_1^\inv h_1^\inv h_1^\inv h_1^\inv \cdots\cdots h_1^\inv h_1^\inv h_1^\inv h_1^\inv h_1^\inv h_1^\inv}^{\text{($l_i$ repetitions) $\mapsto_1 \alpha$.}} g_1^{-s_i} h_1^{-t_i} g_1^{m_i} h_1^{n_i} g_1^{k_i} \cdots 
$}
\centerline{$\displaystyle 
 \qquad \cdots g_1^{-s_j} h_1^{-t_j} \underbrace{g_1 \, g_1 \, \cdots \, g_1 \, g_1}_{\text{($m_j$ reps.) $\mapsto_\kappa \beta$.}} \, \underbrace{h_1 \, h_1 \, \cdots \, h_1 \, h_1}_{\text{($n_j$ reps.) $\mapsto_\kappa \gamma$}} g_1^{k_j} h_1^{-l_j} \cdots
$}
}

\smallskip
\noindent
It follows that some translate of a subsegment of $\alpha$ stays close rel endpoints to $\beta\gamma$, which implies in turn that there exist integers $\xi,\eta > 0$ such that some translate of $\beta$ stays close rel endpoints to a subsegment of $\alpha$ labelled by $h_1^{-\xi}$, and some translate of $\gamma$ stays close rel endpoints to a subsegment of $\alpha$ labelled by $h_1^{-\eta}$. Since $m_j \lessless n_j$ it follows that $\xi < \eta$, and therefore some translate of $\beta$ stays close rel endpoints to a subsegment of~$\gamma$. By taking $m_j$ sufficiently large this contradicts $g_1 \not\sim_\kappa h_1$, item~\pref{ItemBeFujiClaimOne} above. Similarly if $f_i \sim i_\kappa(f_j^\inv)$, equivalently if $f_i^\inv \sim i_\kappa(f_j)$, then after inverting the first line in the above diagram, obtaining the segment $\alpha^\inv$ labelled by $l_i$ repetitions of $h_1$, one sees that some translate of a subsegment $\alpha^\inv$ stays close rel endpoints to~$\beta\gamma$, which leads similarly to a contradiction of $g_1 \not\sim_\kappa h_1$.

Next we turn to the proof of~\pref{ItemNotSimThree}. The quasi-axis $A_1(f_i)$ can be subdivided into alternating subsegments labelled $\alpha_1$ and $\beta_1$ which are the images under the map $A_T(f_i) \xrightarrow{\xi_1} A_1(f_i)$ of alternating subsegments of $A_T(f_i)$ labelled $h_1^{-t_i}$ and $g_1^{m_i} h_1^{n_i} g_1^{k_i} h_1^{-l_i} g_1^{-s_i}$. Similarly, $A_\kappa(f_i^\inv)$ can be subdivided into alternating subsegments labelled $\alpha_\kappa$ and $\beta_\kappa$, the images under $A_T(f_i^\inv) \xrightarrow{\xi_\kappa} A_\kappa(f_i^\inv)$ of alternating subsegments of $A_T(f_i^\inv)$ labelled $h_1^{t_i}$ and $g_1^{s_i} h_1^{l_i} g_1^{-k_i} h_1^{-n_i} g_1^{-m_i}$. Note that each $\alpha_1\beta_1$ subsegment of $A_1(f_i)$ is a fundamental domain, and similarly for each $\alpha_\kappa\beta_\kappa$ subsegment of $A_\kappa(f_i)$. This situation is depicted in the following diagram:
\begin{align*}
A_1(f_i): \qquad &\cdots\,\, 
\overbrace{
 h_1^{-t_i} 
}^{\alpha_1} 
\overbrace{
 g_1^{m_i} \,\, h_1^{n_i} \,\, g_1^{k_i} \,\, h_1^{-l_i} \,\, g_1^{-s_i}
}^{\beta_1} 
\overbrace{
 h_1^{-t_i} 
}^{\alpha_1} 
\overbrace{
 g_1^{m_i} \, h_1^{n_i} \, g_1^{k_i} \, h_1^{-l_i} \, g_1^{-s_i}
}^{\beta_1} 
\overbrace{
 h_1^{-t_i} 
}^{\alpha_1} 
 \,\,\cdots
\\
A_\kappa(f_i^\inv): \qquad & \cdots\,\,
\underbrace{
 h_1^{t_i} 
}_{\alpha_\kappa}
\underbrace{
 g_1^{s_i} \, h_1^{l_i} \, g_1^{-k_i} \, h_1^{-n_i} \, g_1^{-m_i}
}_{\beta_\kappa}
\underbrace{
 h_1^{t_i} 
}_{\alpha_\kappa}
\underbrace{
 g_1^{s_i} \, h_1^{l_i} \, g_1^{-k_i} \, h_1^{-n_i} \, g_1^{-m_i}
}_{\beta_\kappa}
\underbrace{
 h_1^{t_i} 
}_{\alpha_\kappa} \,\,\cdots
\end{align*}

\noindent
For sufficiently large choice of $t_i > \!\! > s_i,l_i,k_i,m_i,n_i$ it follows that $\alpha_1$ is much longer than $\beta_\kappa$ and $\alpha_\kappa$ is much longer than $\beta_1$. 

Arguing by contradiction, suppose that $f_i \sim i_\kappa(f_i^\inv)$ for arbitrarily large choices of $t_i$. Thus, there are subsegments of $A_1(f_i)$ and of $A_\kappa(f^\inv_i)$, each containing arbitrarily many fundamental domains, that are close rel endpoints to each other. This is depicted in the above diagram, but the alignment of the fundamental domains is not yet clear; by studying various cases of that alignment we shall arrive in each case at a contradiction. First, it immediately follows that $h_1 \sim i_\kappa(h_1^\inv)$, because by using that $\alpha_1$ is much longer than $\beta_\kappa$ and that $\alpha_\kappa$ is much longer than $\beta_1$ it follows that some subsegment of $\alpha_1$ labelled by a large power of $h_1^\inv$ is close after translation to some subsegment of $\alpha_\kappa$ labelled by a large power of $h_1$. 

We now consider four very similar cases of alignment. 

\textbf{Case 1:} The terminal $g_1^{-m_i/2}$ subsegment of $\beta_\kappa$ cannot be close rel endpoints to a subsegment of $\alpha_1$: otherwise, since $h_1^\inv \sim i_\kappa(h_1)$ it would follow that the terminal $g_1^{-m_i/2}$ segment of $\beta_k$ is after translation close rel endpoints to a subsegment of $\alpha_\kappa$ labelled by some large power of $h_1$, which for sufficiently large $m_i$ contradicts that $g_1^\inv \not\sim_\kappa h_1$, item~\pref{ItemBeFujiClaimTwo}.

\textbf{Case 2:} The initial $g_1^{s_i/2}$ subsegment of $\beta_\kappa$ cannot be close rel endpoints to a subsegment of $\alpha_1$: otherwise for sufficiently large $s_i$ we contradict that $h_1 \not\sim_\kappa g_1$.

\textbf{Case 3:} The terminal $g_1^{-s_i/2}$ subsegment of $\beta_1$ cannot be close rel endpoints to a subsegment of $\alpha_\kappa$: otherwise for sufficiently large $s_i$ we contradict that $g_1^\inv \not\sim_1 h_1^\inv$.

\textbf{Case 4:} The initial $g_1^{m_i}$ subsegment of $\beta_1$ cannot be close rel endpoints to a subsegment of $\alpha_\kappa$: otherwise for sufficiently large $m_i$ we contradict that $h_1^\inv \not\sim_1 g_1$.

As a consequence of Cases 1--4 and the fact that $\alpha_1$ is much longer than $\beta_\kappa$ and $\alpha_\kappa$ is much longer than $\beta_1$, it follows that $\alpha_1$ and $\alpha_\kappa$ are somewhat close rel endpoints: the distance between their left endpoints has an upper bound comparable to the lengths of the terminal $g_1^{-m_i/2}$ segment of $\beta_\kappa$ and the terminal $g_1^{-s_i/2}$ segment of $\beta_1$, respectively; and similarly for their right endpoints. But now, using that $s_i > \!\! > m_i,n_i$, it follows that some long subsegment of the $g_1^{-s_i}$ segment of $\beta_1$ is close rel endpoints to the entire $h_1^{-n_i}$ subsegment of $\beta_\kappa$, which for sufficiently large $n_i$ contradicts that $g_1 \not\sim_1 h_1$. 

This completes Step~3.
\end{proof}

\begin{proof}[Step 4: Some quasimorphisms on $N$.] We cite Fujiwara \cite{Fujiwara:H2BHyp} for a method which associates quasimorphisms to loxodromic elements of hyperbolic group actions. Then we cite a result of \BeFuji\ giving some cyclic subgroups on which these quasimorphisms are unbounded and others on which they are bounded. In later steps these results will be applied to the elements $f_1,f_2,\ldots$ constructed in Step~3 and the cyclic subgroups they generate. 

Recall that for each $\kappa \le K_1$ where the action $F \act_\kappa X$ is Schottky, and for each nontrivial $g \in F$, the quasi-axis $A_\kappa(g)$ is a $K,C$-quasigeodesic path in the $\delta$-hyperbolic space $X$. Using the Morse property of quasigeodesics, we fix a constant $B = B(\delta,K,C)$ such that for any $K,C$ quasigeodesic segment $\alpha$ between points $p,q$ in a $\delta$-hyperbolic space, the Hausdorff distance between $\alpha$ and any geodesic $[p,q]$ is at most~$B$. We shall apply this where $\alpha$ is a subsegment of any of the quasi-axes $A_\kappa(g)$.

Henceforth let $W \in [3B,3B+1)$ be the unique integer, as in \BeFuji\ and in \cite{Fujiwara:H2BHyp}.

Given an edge path $\eta = e_1 \cdots e_K$ in $X$ let $\abs{\cdot}=K$ denote its length and $\overline\eta = \bar e_K \cdots \bar e_1$ its reversal. Two subpaths $\eta' = e_{i'} \cdots e_{j'}$ and $\eta'' = e_{i''} \cdots e_{j''}$ are said to \emph{overlap} if $\{i',\ldots,j'\} \intersect \{i'',\ldots,j''\} \ne \emptyset$. 

Consider any edge path $w$ in $X$ such that $\abs{w}>W$. Given another edge path $\alpha$, define a \emph{copy of $w$ in~$\alpha$} to be a subpath of $\alpha$ which is a translate of $w$ by an element of $N$, and define
\begin{align*}
\abs{\alpha}_w &= \text{the maximal number of nonoverlapping copies of $w$ in~$\alpha$} \\
\intertext{Then, given vertices $x,y \in X$, define}
(*) \qquad c_w(x,y) &= d(x,y) - \inf_\alpha \left( \abs{\alpha} - W \abs{\alpha}_w \right) \\
 &= \sup_\alpha \left( W \abs{\alpha}_w - (\abs{\alpha} - d(x,y)) \right) 
\end{align*}
where $\alpha$ varies over all edge paths in $X$ from $x$ to $y$. 

\smallskip

\emph{Remark.} If $\alpha$ is a geodesic then the parenthetical quantity inside the supremum of $(*)$ is equal to $W \abs{\alpha}_w$ which hearkens back to the original method of Brooks in \cite{Brooks:H2bRemarks} computing $H^2_b$ for a free group, where $X$ is the Cayley tree of a free group and $\alpha$ is a geodesic in that tree. When $X$ is a hyperbolic metric space, the method of Fujiwara in \cite{Fujiwara:H2BHyp} allows $\alpha$ to travel through a sequence of nonoverlapping ``wormholes''---copies of $w$---in an attempt to optimize the parenthetical quantities of $(*)$. With this allowance a path $\alpha$ which achieves the optimum need no longer be a geodesic, but at least it is a quasigeodesic (see~\pref{ItemOptimumQG}~below).

\smallskip

Pick once and for all a base point $x_0 \in X$. For each edge path $w$ in $X$ define a function $h_{w} \from N \to \reals$ as follows:
$$(**) \qquad h_{w}(\gamma) = c_w(x_0,\gamma \cdot x_0) - c_{\overline w}(x_0,\gamma \cdot x_0)
$$

\begin{enumeratecontinue}
\item \label{ItemQMData}
Assuming $\abs{w} \ge 2W$:
\begin{enumerate}
\item \label{ItemOptimumQG}
Any edge path $\alpha$ in $X$ which realizes the optimum in $(*)$ is a quasigeodesic with constants $\ell = 2$, $c \ge 0$ depending only on $\delta,K,C$.
\item\label{ItemQMValueBound}
For each $\gamma \in N$ we have 
$$\abs{h_w(\gamma)} \le H(\gamma)
$$
where the constant $H(\gamma)$ is independent of $w$.
\item \label{ItemQMDefectBound}
The function $h_{w}$ is a quasimorphism with defect bounded above by a constant $D$ depending only on $\delta,K,C$.
\end{enumerate}
\end{enumeratecontinue}
Item~\pref{ItemOptimumQG} follows from \cite[Lemma 3.3]{Fujiwara:H2BHyp} which says that $\alpha$ is a quasigeodesic with multiplicative constant $\frac{\abs{w}}{\abs{w} - W} \le 2 = \ell$ and additive constant $\frac{2W\abs{w}}{\abs{w} - W} \le 4W<12B+4=c$. For item~\pref{ItemQMValueBound}, if the edge path $\alpha$ from $x_0$ to $\gamma \cdot x_0$ realizes the optimum in the first term $c_w(x_0,\gamma \cdot c_0)$ of $h_w(\gamma)$ then by \pref{ItemOptimumQG} it follows that 
$$0 \le c_w(x_0,\gamma \cdot x_0) \le \abs{\alpha} \le \ell \, d(x_0,\gamma \cdot x_0) + c
$$
and the same bound holds for the second term $c_{\overline{w}}(x_0,\gamma \cdot x_0)$. Item~\pref{ItemQMDefectBound} follows from \cite[Proposition 3.10]{Fujiwara:H2BHyp} which gives a formula for an upper bound to the defect involving only the quantities $\delta$, $W$, $\ell=2$, and $c=12B+4$ (the latter two numbers $\ell$ and~$c$ being upper bounds for the quantities $\frac{\abs{w}}{\abs{w}-W}$ and $\frac{2 W \abs{w}}{\abs{w}-W}$ as we have just shown).

Next define a quasimorphism $h(f) \from N \to \reals$ for each nontrivial $f \in F$ as follows. First, for each line $L \subset T$ pick a base vertex $x_T(L) \in L$, chosen so that $\xi(x_T(L))$ is a point of $\xi(L)$ closest to $x_0$. Next, for each nontrivial $f \in F$ define $x(f) = \xi(x_T(A_T(f))) \in A(f)$, and notice that $x(f)=x(f^d)$ for all integers $d \ne 0$. Then define $w(f)$ to be an oriented geodesic path in $X$ with initial vertex $x(f)$ and terminal vertex $f \cdot x(f)$. 
Finally define 
$$h(f) = h_{w(f)} \from N \to \reals
$$
Although it need not be true in general that $w(f) \ge 2W$ as needed to apply~\pref{ItemQMData}, as long as $f$ is loxodromic the inequality $\abs{w(f^d)} \ge 2W$ is true for sufficiently large integers $d>0$, which leads to:
\begin{enumeratecontinue}
\item\label{ItemUnbddQM}
There is an integer $d(f)>0$ defined for each loxodromic $f \in N$, and an integer $d(f,f')>0$ defined for each pair $f,f' \in N$ such that $f$ is loxodromic, such that the following hold:
\begin{enumerate}
\item\label{ItemNonNegUnbddPos}
If $f \not\sim f^\inv$ then for all integers $d \ge d(f)$ the values of $h(f^d)$ are non-negative and unbounded on positive elements of the cyclic group $\<f\>$.
\item \label{ItemNotSimPMZero}
If $f \not\sim f'{}^{\pm 1}$ then for all integers $d \ge d(f,f')$ the value of $h(f^d)$ is zero on each element of the cyclic group $\<f'\>$.
\item\label{ItemNonNegOnPos}
If $f \not\sim f'{}^\inv$ then for all integers $d \ge d(f,f')$ the values of $h(f^d)$ are non-negative on positive elements of the cyclic group $\<f'\>$.
\end{enumerate}
\end{enumeratecontinue}
Item~\pref{ItemNonNegUnbddPos}, and item~\pref{ItemNotSimPMZero} when $f'$ is loxodromic, are proved in \BeFuji\ Proposition~5. 

Before continuing, here are a few remarks. First we note that \BeFuji\ Proposition~5 has a hypothesis that $f$ be cyclically reduced, which guarantees that if one chooses $x_0 \in X$ to be the image under $\xi$ of the identity vertex in the Cayley tree of $F$ then $A(f)$ passes through $x_0$; but the proof goes through without change using instead that $A(f)$ passes through $x(f)$. We note also that \BeFuji\ Proposition~5 has an implicit hypothesis $\abs{w(f^d)} > W \ge 3B$, which we arrange by requiring $d$ to be sufficiently large. Finally, we note that item~\pref{ItemNotSimPMZero} is a consequence of item~\pref{ItemNonNegOnPos} (proved in detail below) applied once as stated and once with $f'$ replaced by $f'{}^\inv$.

To prove items~\pref{ItemNotSimPMZero} and~\pref{ItemNonNegOnPos} when $f'$ is not loxodromic, by the WWPD hypothesis the element $f'$ must be elliptic meaning that the diameter of the set $\{f'{}^i(x_0)\}_{i \in \Z}$ is bounded by some constant $\Delta$. If $d$ is sufficiently large then $w(f^d)$ has length greater than $\ell\Delta+c$, which by~\pref{ItemOptimumQG} is an upper bound for the length of any path that realizes the optimum for either term of the expression $h(f^d)(f'{}^i) = c_{w(f^d)}(x_0,f'{}^i(x_0)) - c_{\overline w(f^d)}(x_0,f'{}^i(x_0))$. It follows that $h(f^d)$ is zero on $\<f'\>$. 

\smallskip

\textbf{Remark.} We do not know how to prove \pref{ItemNotSimPMZero} if one were to allow $f'$ to be parabolic (c.f.\ the final paragraph of Section~3 of \BeFuji).

\smallskip

Here is a proof of item~\pref{ItemNonNegOnPos} when $f'$ is loxodromic, which is similar to elements of the proof of \BeFuji\ Proposition~5. If~\pref{ItemNonNegOnPos} fails then there exist integer sequences $d_i \to +\infinity$ and $e_i > 0$ such that the following quantity is negative for all~$i$:
$$h(f^{d_i})(f'{}^{e_i}) = c_{w(f^{d_i})}(f'{}^{e_i}) - c_{\overline w(f^{d_i})}(f'{}^{e_i})
$$
It follows that $c_{\overline w(f^{d_i})}(f'{}^{e_i}) = c_{w(f^{d_i})}(f'{}^{-e_i})$ is positive for all~$i$. The geodesic path $w(f^{d_i})$ has the same endpoints as the $K,C$-quasigeodesic subsegment $\alpha_i \subset A(f)$ with initial endpoint $x(f)$ and terminal endpoint $f^{d_i} \cdot x(f)$, and therefore $w(f^{d_i})$ and $\alpha_i$ are uniformly Hausdorff close rel endpoints. By \pref{ItemOptimumQG} there is an $\ell,c$ quasigeodesic path $\beta_i$ from $x_0$ to $f'{}^{-e_i} \cdot x_0$ realizing the optimum in the definition of the number $c_{w(f^{d_i})}(f'{}^{-e_i})$, and since that number is positive it follows that some subsegment of~$\beta_i$ is a translate of~$w(f^{d_i})$ and therefore is uniformly Hausdorff close rel endpoints to a translate of the $K,C$-quasigeodesic segment~$\alpha_i$. Recalling that $x(f')$ is a point of $A(f')$ closest to $x_0$, letting $[x_0,x(f')]$ denote a geodesic segment in $X$ with the indicated endpoints, and letting $\gamma_i \subset A(f'{}^\inv)$ denote the $K,C$-quasigeodesic subsegment with initial point $x(f')$ and terminal point $f'{}^{-e_i} \cdot x(f')$, the following path is also a uniform quasigeodesic from $x_0$ to $f'{}^{-e_i} \cdot x_0$ and so stays uniformly Hausdorff close rel endpoints to $\beta_i$:
$$[x_0,x(f')] * \gamma_i * \left( f'{}^{-e_i} \cdot [x(f'),x_0] \right)
$$
It follows that some subsegment of the above path is uniformly Hausdorff close rel endpoints to a translate of $\alpha_i$. Noting that $\alpha_i$ is a concatenation of $d_i$ consecutive fundamental domains of the axis $A(f)$, noting also that the prefix $[x_0,x(f')]$ is independent of~$i$, and noting that the suffix $f'{}^{-e_i}[x(f'),x_0]$ has length independent of~$i$, it follows that some subsegment of $\gamma_i$ is uniformly Hausdorff close to a translate of a subpath $\alpha'_i \subset \alpha_i$ consisting of a concatenation of $d_i-d_0$ consecutive fundamental domains of $A(f)$ where $d_0$ is constant. Since $d_i \to +\infinity$ it follows that $\Length(\alpha'_i) \to +\infinity$, from which it follows that $f \sim f'{}^\inv$, a contradiction.

This completes Step 4.
\end{proof}

\begin{proof}[Step 5: Quasimorphisms on $\Gamma$.] \quad We now define a sequence of quasimorphisms on $\Gamma$ with uniformly bounded defects, denoted
$$h_1,h_2,h_3,\ldots \from \Gamma \to \reals
$$
We shall use this sequence in Step 6 to prove Theorem~\ref{ThmWWPDHTwoB}. 

By combining \pref{ItemNotSimThree}, \pref{ItemNotSimOne}, \pref{ItemQMData} and \pref{ItemUnbddQM}, we may choose for each $i \ge 1$ an integer $d_i \ge 1$ such that the following hold:
\begin{enumeratecontinue}
\item\label{ItemQMUnifBddDefect} The function $h(f_i^{d_i}) \from N \to \reals$ is a quasimorphism with defect bounded by a constant $D \ge 0$ independent of $i$ (by \pref{ItemQMDefectBound}).
\item\label{ItemQMUnbddPositive}
The quasimorphism $h(f_i^{d_i})$ is non-negative and unbounded on positive elements of the cyclic group $\<f_i\>$ (by \pref{ItemNotSimThree} with $\kappa=1$, and by~\pref{ItemNonNegUnbddPos}).
\item\label{ItemQMKappaZero}
If $2 \le \kappa \le K$ then the quasimorphism $h(f_i^{d_i})$ is non-negative on positive elements of the cyclic group $\<i_\kappa(f_i)\>$ (by \pref{ItemNotSimThree} with $2 \le \kappa \le K$, and by~\pref{ItemNonNegOnPos}).
\item\label{ItemQMUnbddForIgtJ}
If $i>j$ and $1 \le \kappa \le K$ then the quasimorphism $h(f_i^{d_i})$ is zero on the cyclic group $\<i_\kappa(f_j)\>$ (by \pref{ItemNotSimOne} and~\pref{ItemNotSimPMZero}).
\end{enumeratecontinue}
Note that the constant $d_i$ is made independent of $\kappa$, and in~\pref{ItemQMUnbddForIgtJ} of $j$, by the maximization trick.

%
%
%
%
%
Define functions $h'_i \from N^Q \semidirect \Sym(Q) \to \reals$ and $h_i \from \Gamma \to \reals$ as follows:
\begin{align*}
h'_i(\rho,\phi) &= \sum_{\kappa=1}^K h(f_i^{d_i})(\rho(B_\kappa)), \qquad \text{for} \,\, \rho \in N^Q, \phi \in \Sym(Q) \\
h_i(\mu) &= h'_i(\theta(\mu)) = h'_i(\rho_\mu,\phi_\mu), \qquad \text{for}\,\, \mu \in \Gamma \\
 &= \sum_{\kappa=1}^K h(f_i^{d_i})(\rho_\mu(B_\kappa)) \\
 &= \sum_{\kappa=1}^K h(f_i^{d_i})(i_\kappa(\mu)) \qquad\text{in the special case $\mu \in N$.}
\end{align*}
Note that $h_i(\mu)$ has no dependence on $\phi_\mu$, it depends on $\rho_\mu$ alone. We show:
\begin{enumeratecontinue}
\item\label{ItemQMDefectBoundKD}
Each $h_i \from \Gamma \to \reals$ is a quasimorphism with defect $\le KD$.
\end{enumeratecontinue}
For each $\mu,\nu \in \Gamma$ we have
\begin{align*}
h_i(\mu \cdot \nu) &= h'_i(\theta(\mu \cdot \nu)) = h'_i(\theta(\mu) \cdot \theta(\nu)) = h'_i\left( (\rho_\mu,\phi_\mu) \cdot (\rho_\nu, \phi_\nu) \right) \\
 &= h'_i (\rho_\mu \cdot (\rho_\nu \composed \phi_\mu), \phi_\nu \phi_\mu) \\
 &= \sum_{\kappa=1}^K h(f_i^{d_i})(\rho_\mu(B_\kappa) \cdot \rho_\nu(\phi_\mu(B_\kappa))) \quad\text{(recall that $\rho_\mu(B_\kappa)$, $\rho_\mu(\phi_\mu(B_\kappa)) \in N$)}\\
 &{}^{KD}\!\!= \sum_{\kappa=1}^K h(f_i^{d_i})(\rho_\mu(B_\kappa)) + \sum_{\kappa=1}^K h(f_i^{a_i})(\rho_\nu(\phi_\mu(B_\kappa))) \\
\intertext{where the symbol ${}^{KD}\!\!=$ means that the difference of the two sides has absolute value bounded by~$KD$; this holds because of~\pref{ItemQMUnifBddDefect}. Since $\phi_\mu \in \Sym(Q)$ permutes the set of cosets $Q = \{B_1,\ldots,B_K\}$ of $N$ in $\Gamma$, by rewriting the second term on the right hand side of the last equation we obtain:}
 &= \sum_{\kappa=1}^K h(f_i^{a_i})(\rho_\mu(B_\kappa)) + \sum_{\kappa=1}^K h(f_i^{a_i})(\rho_\nu(B_\kappa)) \\ &= h_i(\mu) + h_i(\nu)
\end{align*}
This completes Step~5.
\end{proof}

\begin{proof}[Step 6: The proof of the Global WWPD Theorem
] 
To 
embed $\ell^1 \inject H^2_b(\Gamma;\reals)$, consider 
a sequence of real numbers $(t)=(t_1,t_2,t_3,\ldots)$ with $\sum \abs{t_i} < \infinity$. Define the function $h_{(t)} \from \Gamma \to \reals$ by 
$$h_{(t)} = t_1 h_1 + t_2 h_2 + t_3 h_3 + \cdots
$$ 
By \pref{ItemQMDefectBoundKD}, each $h_i$ is a quasimorphism with defect $\le KD$, and it follows that $h_{(t)}$ is a convergent series defining a quasimorphism with defect $\le KD \sum \abs{t_i}$. 

The map $(t) \mapsto h_{(t)}$ evidently defines a linear map from the vector space $\ell^1$ to the vector space of quasimorphisms $\Gamma \mapsto \reals$, and so it remains to show that if $(t)$ is nonzero in $\ell^1$ then $h_{(t)}$ is unbounded and has unbounded difference with every homomorphism $\Gamma \mapsto \reals$. Letting $j \ge 1$ be the least integer such that $t_j \ne 0$ we have:
\begin{align*}
h_{(t)}(f_j^d) &= \underbrace{\sum_{i < j} t_i h_i(f_j^d)}_{=0} + t_j h_j(f_j^d) + \sum_{i > j} t_i h_i(f_j^d)\\
 &= t_j \sum_{\kappa=1}^K h(f_j^{d_j})(i_\kappa(f_j^d)) + \sum_{i > j} t_i \left( \sum_{\kappa=1}^K \underbrace{h(f_i^{d_i})(i_\kappa(f_j^d))}_{=0 \, \text{by \pref{ItemQMUnbddForIgtJ}}} \right) \\
 &= t_j \left(h(f_j^{d_j})(f_j^d) + \sum_{\kappa=2}^K h(f_j^{d_j}) (i_\kappa(f_j^d)) \right)
\end{align*}
Regarding the quantity in the big parentheses on the last line, as $d>0$ varies the summand $h(f_j^{d_j})(f_j^d)$ is non-negative and unbounded by~\pref{ItemQMUnbddPositive}, and the other summands with $2 \le \kappa \le K$ are non-negative by \pref{ItemQMKappaZero}, so the whole quantity is unbounded. This shows that $h_{(t)}(f^d_j)$ is unbounded. Also, since $f_j^d \in F \subgroup [N,N] \subgroup [\Gamma,\Gamma]$, the value of every homomorphism $\Gamma\mapsto\reals$ on $f_j^d$ is zero, and so $h_{(t)}$ has unbounded difference with every such homomorphism. 
\end{proof}

\bibliographystyle{amsalpha} 
\bibliography{mosher} 

\end{document}